\documentclass[11pt]{article}

\usepackage{amsfonts}
\usepackage{amscd}
\usepackage{amssymb}
\usepackage{amsthm}
\usepackage{amsmath} 
\usepackage{hyperref}
\usepackage{pictexwd}
\usepackage{color}

\usepackage{stmaryrd}
\usepackage{graphicx}
\usepackage{verbatim}

\theoremstyle{plain}
	\newtheorem{thm}{Theorem}[section]

\theoremstyle{definition}

	\newtheorem{remark}[thm]{Remark}
\theoremstyle{remark}

\numberwithin{equation}{section}

\setlength{\evensidemargin}{1in}
\addtolength{\evensidemargin}{-1in}
\setlength{\oddsidemargin}{1.5in}
\addtolength{\oddsidemargin}{-1.5in}
\setlength{\topmargin}{1in}
\addtolength{\topmargin}{-1.5in}

\setlength{\textwidth}{16cm}
\setlength{\textheight}{23cm}

\def\cE{\mathcal{E}}

\def \cP{\mathcal{P}}

\def\cX{\mathcal{X}}

\def\CC{\mathbb{C}}

\def\FF{\mathbb{F}}

\def\QQ{\mathbb{Q}}
\def\RR{\mathbb{R}}

\def\ZZ{\mathbb{Z}}

\def\fa{\mathfrak{a}}

\def\fc{\mathfrak{c}}
\def\fd{\mathfrak{d}}

\def\fg{\mathfrak{g}}
\def\fh{\mathfrak{h}}

\def\fn{\mathfrak{n}}
\def\fo{\mathfrak{o}}
\def\fp{\mathfrak{p}}

\def\fr{\mathfrak{r}}

\def\fsl{\mathfrak{sl}}

\def\ad{\mathrm{ad}}
\def\Aut{\mathrm{Aut}}
\def\Card{\mathrm{Card}}

\def\dim{\mathrm{dim}}

\def\la{\lambda}


\def\mapright#1{\smash{\mathop
        {\longrightarrow}\limits^{#1}}}


\definecolor{Gray}{gray}{0.5}

\makeatletter
\renewcommand{\@makefnmark}{\mbox{\textsuperscript{}}}
\makeatother

\title{Combinatorics in affine flag varieties}
\author{
James Parkinson \\
Institut f\"{u}r Mathematische Strukturtheorie \\ Technische Universit\"{a}t Graz\\
Steyrergasse 30/III, A-8010 Graz Austria \\
parkinson@weyl.math.tu-graz.ac.at \\ \\
\and
\\
Arun Ram \\
Department of Mathematics\\ University of Wisconsin\\
Madison, WI 53706 USA \\ 
ram@math.wisc.edu \\
and \\
Department of Mathematics and Statistics \\
University of Melbourne \\
Parkville VIC 3010 Australia \\ \\
\and
\\
Cristoph Schwer \\
Mathematisches Institut Universit\"{a}t zu K\"{o}ln\\
Weyertal 86-90, 50931 K\"oln, Germany\\
cschwer@math.uni-koeln.de
}
\date{}

\begin{document}

\maketitle

$$\textit{Dedicated to Gus Lehrer on the occasion of his $60^{\mathrm{th}}$ birthday}$$

\begin{abstract}
The Littelmann path model gives a realisation of the crystals of integrable representations
of symmetrizable Kac-Moody Lie algebras.  Recent work of Gaussent-Littelmann \cite{GL}
and others \cite{BG} \cite{GR} has demonstrated a connection between this model and the 
geometry of the loop Grassmanian. The alcove walk model is a version of the path model which 
is intimately connected to the combinatorics of the affine Hecke algebra. 
In this paper we define a refined alcove walk model which encodes the points of the affine flag variety.  
We show that this combinatorial indexing naturally indexes the cells in generalized Mirkovic-Vilonen 
intersections.
\end{abstract}

\footnote{AMS Subject Classifications: Primary 20G05; Secondary 17B10, 14M15.}

\section{Introduction}

A \textit{Chevalley group} is a group in which row reduction works. This means that it is a group with a special set of generators (the ``elementary matrices'') and relations which are generalisations of the usual row reduction operations. One way to efficiently encode these generators and relations is with a Kac-Moody Lie algebra $\fg$.   From the data of the Kac-Moody Lie algebra and a choice of a 
commutative ring or field $\FF$ the group $G(\FF)$ is built by generators and relations 
following Chevalley-Steinberg-Tits.

Of particular interest is the case where $\FF$ is the field of fractions of $\fo$, the
discrete valuation ring $\fo$ is the ring of integers in $\FF$, $\fp$ is the unique maximal
ideal in $\fo$ and $k = \fo/\fp$ is the residue field.  The favourite examples are
$$\begin{array}{lll}
\FF = \CC((t)) \qquad &\fo = \CC[[t]] \qquad &k=\CC, \\
\FF = \QQ_p &\fo=\ZZ_p &k = \FF_p, \\
\FF = \FF_q((t)) & \fo = \FF_q[[t]] &k = \FF_q,
\end{array} 
$$
where $\QQ_p$ is the field of $p$-adic numbers, $\ZZ_p$ is the ring of $p$-adic integers,
and $\FF_q$ is the finite field with $q$ elements.  For clarity of presentation we shall work in the
first case where $\FF=\CC((t))$.  The diagram
\begin{align}\label{eq:picture}
\begin{matrix}\FF\\
$\beginpicture
\setcoordinatesystem units <0.8cm,0.8cm>         
\setplotarea x from -0.1 to 0.3, y from -0.5 to 0.5  
\put{$\cup$} at 0 0 \put{$\shortmid$} at 0.21 0.08
\put{$\shortmid$} at 0.21 0.0
\endpicture$  \\
\fo&\mapright{\mathrm{ev}_{t=0}} & k=\fo/\fp
\end{matrix}\qquad\quad\textrm{gives}\quad\qquad
\begin{matrix}
G &= &G(\CC((t))) \\
$\beginpicture
\setcoordinatesystem units <0.8cm,0.8cm>         
\setplotarea x from -0.1 to 0.3, y from -0.5 to 0.5  
\put{$\cup$} at 0 0 \put{$\shortmid$} at 0.21 0.08
\put{$\shortmid$} at 0.21 0.0
\endpicture$ &&$\beginpicture
\setcoordinatesystem units <0.8cm,0.8cm>         
\setplotarea x from -0.1 to 0.3, y from -0.5 to 0.5  
\put{$\cup$} at 0 0 \put{$\shortmid$} at 0.21 0.08
\put{$\shortmid$} at 0.21 0.0
\endpicture$ \\
K &=& G(\CC[[t]]) &\mapright{\mathrm{ev}_{t=0}} &G(\CC) \\
$\beginpicture
\setcoordinatesystem units <0.8cm,0.8cm>         
\setplotarea x from -0.1 to 0.3, y from -0.5 to 0.5  
\put{$\cup$} at 0 0 \put{$\shortmid$} at 0.21 0.08
\put{$\shortmid$} at 0.21 0.0
\endpicture$ &&$\beginpicture
\setcoordinatesystem units <0.8cm,0.8cm>         
\setplotarea x from -0.1 to 0.3, y from -0.5 to 0.5  
\put{$\cup$} at 0 0 \put{$\shortmid$} at 0.21 0.08
\put{$\shortmid$} at 0.21 0.0
\endpicture$ &&$\beginpicture
\setcoordinatesystem units <0.8cm,0.8cm>         
\setplotarea x from -0.1 to 0.3, y from -0.5 to 0.5  
\put{$\cup$} at 0 0 \put{$\shortmid$} at 0.21 0.08
\put{$\shortmid$} at 0.21 0.0
\endpicture$ \\
I &= &\mathrm{ev}_{t=0}^{-1}(B(\CC)) &\mapright{\mathrm{ev}_{t=0}} &B(\CC)
\end{matrix}
\end{align}
where $B(\CC)$ is the ``Borel subgroup'' of ``upper triangular matrices'' in $G(\CC)$.
The \textit{loop group} is $G=G(\CC((t)))$, $I$ is the standard \textit{Iwahori subgroup} of $G$, 
\begin{equation}
\begin{array}{c}
G(\CC)/B(\CC)\quad\textrm{is the \textit{flag variety}}, \\ \\
G/I\quad\textrm{is the \textit{affine flag variety}},\qquad\textrm{and}\qquad G/K\quad\textrm{is the \textit{loop Grassmanian}}.
\end{array}
\end{equation}
The primary tool for the study of these varieties (ind-schemes) are the following
``classical'' double coset decompositions, see \cite[Ch.\,8]{St} and \cite[\S (2.6)]{Mac1}

\begin{thm}\label{thm:decompositions} Let $W$ be the Weyl group of $G(\CC)$,
$\widetilde{W} = W \ltimes \fh_\ZZ$ the affine Weyl group, and $U^-$ the subgroup
of ``unipotent lower triangular'' matrices in $G(\FF)$ and $\fh_\ZZ^+$ the set of
dominant elements of $\fh_\ZZ$.  Then
$$\begin{array}{llcll}
\begin{array}{c} \hbox{Bruhat} \\ \hbox{decomposition}\end{array}
\quad&\displaystyle{G = \bigsqcup_{w\in W} BwB}
&&\displaystyle{K=\bigsqcup_{w\in W}IwI}
\\
\\
\\
\begin{array}{c} \textrm{Iwahori} \\ \textrm{decomposition}\end{array}
\quad&\displaystyle{G = \bigsqcup_{w\in \widetilde{W}} IwI}
&&\displaystyle{G = \bigsqcup_{v\in \widetilde{W}} U^-vI}
\\
\\
\\
\begin{array}{c} \hbox{Cartan} \\ \hbox{decomposition}\end{array}\quad
&\displaystyle{G= \bigsqcup_{\lambda^\vee\in \fh_\ZZ^+}
Kt_{\lambda^\vee} K } &\quad &\displaystyle{G =
\bigsqcup_{\mu^\vee\in \fh_\ZZ} U^-t_{\mu^\vee} K}
&\begin{array}{c}\hbox{Iwasawa} \\ \hbox{decomposition}
\end{array}
\end{array}
$$
\end{thm}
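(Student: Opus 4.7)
The plan is to derive all six decompositions from two structural inputs: the $BN$-pair structure on $G(\CC)$ with Weyl group $W$, and the affine $BN$-pair structure on $G = G(\CC((t)))$ with Weyl group $\widetilde{W}$. The finite Bruhat decomposition $G(\CC) = \bigsqcup_{w \in W} BwB$ is the standard consequence of the Tits system axioms applied to $(B(\CC), N)$, where $N$ is the normalizer of a chosen maximal torus. The corresponding decomposition $K = \bigsqcup_{w \in W} IwI$ then follows by pulling back along the evaluation map $\mathrm{ev}_{t=0}\colon K \twoheadrightarrow G(\CC)$: since $I = \mathrm{ev}_{t=0}^{-1}(B(\CC))$ and the kernel of $\mathrm{ev}_{t=0}$ lies in $I$, the preimage of each cell $BwB$ is precisely $IwI$, and disjointness is preserved.

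For the Iwahori decomposition $G = \bigsqcup_{w \in \widetilde W} IwI$ (Iwahori--Matsumoto), the key input is that $(I, \widetilde N)$ is a Tits system in $G$, where $\widetilde N$ is the preimage in $G$ of $\widetilde W = W \ltimes \fh_\ZZ$; given the axioms, the decomposition follows formally. I expect the verification of these axioms to be the main obstacle: one must establish the factorization of $I$ with respect to the affine root subgroups and check that $G$ is generated by $I$ together with $\widetilde N$. This is most naturally carried out via the action of $G$ on its Bruhat--Tits building, each double coset $IwI$ corresponding to a type of minimal gallery based at the fundamental alcove.

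For the Cartan decomposition $G = \bigsqcup_{\lambda^\vee \in \fh_\ZZ^+} K t_{\lambda^\vee} K$ I would invoke the elementary divisor theorem for invertible matrices over the DVR $\fo = \CC[[t]]$: each $g \in G$ admits a Smith-type normal form $g = k_1 t_{\lambda^\vee} k_2$ with $k_i \in K$ and dominant $\lambda^\vee$ uniquely determined by the valuations of the invariant factors. For the Iwasawa decomposition $G = \bigsqcup_{\mu^\vee \in \fh_\ZZ} U^- t_{\mu^\vee} K$ I would use the Gauss/Birkhoff factorization $G = U^- T K$ together with $T/(T \cap K) \cong \fh_\ZZ$; disjointness reduces to computing the $t_{\mu^\vee}$-conjugation filtration of $U^-$ against $K$. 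Finally, the refined Iwahori-style decomposition $G = \bigsqcup_{v \in \widetilde W} U^- v I$ is obtained by combining the previous two ingredients: start from $G = \bigsqcup_{\mu^\vee} U^- t_{\mu^\vee} K$, subdivide each $K$-orbit via $K = \bigsqcup_{w \in W} IwI$, and absorb $t_{\mu^\vee} w$ into $\widetilde W$.
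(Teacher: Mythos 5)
The paper offers no proof of Theorem~\ref{thm:decompositions}: it is quoted as a ``classical'' result with pointers to \cite[Ch.\,8]{St} and \cite[\S 2.6]{Mac1}, so there is no in-paper argument to compare yours against. Measured against those sources, your outline follows the standard route for the first three decompositions: the Tits-system axioms give the finite Bruhat and the Iwahori--Matsumoto decompositions, and $K=\bigsqcup_{w\in W}IwI$ does follow by pulling back $G(\CC)=\bigsqcup_{w}BwB$ along $\mathrm{ev}_{t=0}$, since $\ker(\mathrm{ev}_{t=0})\subseteq I$ and $N(\CC)\subseteq K$ supplies representatives.

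Two soft spots remain. First, the elementary-divisor argument for the Cartan decomposition is really a type-$A$ (or classical-group) argument; in the generality of this paper, where $G$ is a Steinberg--Chevalley--Tits group attached to Kac--Moody data, the Cartan decomposition is instead extracted from the Iwahori decomposition via $K\backslash G/K\leftrightarrow W\backslash\widetilde{W}/W\leftrightarrow \fh_\ZZ^+$, using $K=\bigsqcup_{w\in W}IwI$ on both sides. Second, and more seriously, invoking a ``Gauss/Birkhoff factorization $G=U^-TK$'' as the input to the Iwasawa decomposition is circular: $G=U^-TK$ together with $T/(T\cap K)\cong\fh_\ZZ$ \emph{is} the Iwasawa decomposition, so at that point nothing has been proved. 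The genuine content is (a) existence, usually obtained from the building (every chamber lies in an apartment containing a subsector of the antidominant sector) or by an explicit root-subgroup computation, and (b) disjointness of the cells $U^-vI$, $v\in\widetilde{W}$, which is the delicate half and precisely the statement the rest of the paper leans on (the folding algorithm of Theorem~\ref{mainthm} presupposes that $U^-vI\neq U^-v'I$ for $v\neq v'$; note that the existence half of $G=\bigsqcup_v U^-vI$ is in effect re-derived by that algorithm, starting from $G=\bigsqcup_w IwI$). Your final refinement step, passing from $U^-t_{\mu^\vee}K$ to $\bigsqcup_v U^-vI$, also silently uses the Iwahori factorization $I=(I\cap U^-)(I\cap T)(I\cap U^+)$ and the containment $t_{\mu^\vee}(I\cap U^-)t_{\mu^\vee}^{-1}\subseteq U^-$ in order to turn $U^-t_{\mu^\vee}IwI$ into cells of the form $U^-vI$; this should be made explicit, and none of it addresses why the resulting cells are pairwise disjoint.
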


In this paper we shall refine the Littelmann path model (in its alcove walk form, see
[Ra]) by putting labels on
the paths to provide a combinatorial indexing of the points in the affine flag variety.
This combinatorial method of expressing the points of $G/I$
gives detailed information about the structure of the intersections
\begin{align}\label{eq:intersections}
U^-vI\cap IwI\qquad\textrm{with}\qquad v,w\in\widetilde{W}.
\end{align}
The corresponding intersections in $G/K$ have arisen in many contexts.
Most notably, 
the set of \textit{Mirkovi\'{c}-Vilonen cycles of shape $\la^{\vee}$ and weight $\mu^{\vee}$} is
the set of irreducible components of the closure of $U^-t_{\mu^{\vee}}K\cap Kt_{\la^{\vee}}K$
in $G/K$,
$$
MV(\la^{\vee})_{\mu^{\vee}}=\mathrm{Irr}(\overline{U^-t_{\mu^{\vee}}K\cap Kt_{\la^{\vee}}K}),
$$
and
$$
\hbox{when $k=\FF_q$,}\qquad
\Card_{G/K}(U^-t_{\mu^{\vee}}K\cap Kt_{\la^{\vee}}K)\ \ \hbox{is}
$$
(up to some easily understood factors) 
the coefficient of the monomial symmetric function $m_{\mu^{\vee}}$ in the expansion 
of the Macdonald spherical function $P_{\la^{\vee}}$.

The research of A.\ Ram and J.\ Parkinson was partially supported by the National
Science Foundation under grant DMS-0353038 at the University of Wisconsin.  
The research of C.\ Schwer was supported by a fellowship within the Postdoc-Programme of the German Academic Exchange Service (DAAD). J.\ Parkinson and and C.\ Schwer
thank the University of Wisconsin, Madison for hospitality.  This paper was stimulated
by the workshop on \emph{Buildings and Combinatorial Representation Theory} at
the American Institute of Mathematics March 26-30, 2007.  
We thank these institutions for support of our research.

\section{Borcherds-Kac-Moody Lie algebras}

This section reviews definitions and sets notations for Borcherds-Kac-Moody Lie algebras.
Standard references are the book of Kac \cite{Kac}, 
the books of Wakimoto \cite{Wak1}\cite{Wak2}, the survey article of Macdonald
\cite{Mac4} and the handwritten notes of Macdonald \cite{Mac3}.  Specifically,
\cite[Ch.\,1]{Kac} is a reference for \S 2.1, \cite[Ch.\,3 and 5]{Kac} for \S 2.2, and
\cite[Ch.\,2]{Kac} for \S 2.3.

\subsection{Constructing a Lie algebra from a matrix}

Let $A = (a_{ij})$ be an $n\times n$ matrix.  Let
\begin{equation}
r = \mathrm{rank}(A),
\qquad
\ell = \mathrm{corank}(A),
\qquad\hbox{so that}\quad
r+\ell = n.
\end{equation}
By rearranging rows and columns we may assume that $(a_{ij})_{1\le i,j\le r}$ is nonsingular.  
Define a $\CC$-vector space
\begin{equation}\label{fhdef}
\fh = \fh' \oplus \fd, \qquad\hbox{where}
\qquad
\begin{array}{l}
\hbox{$\fh'$ has basis $h_1,\ldots, h_n$,\quad and} \\
\hbox{$\fd$ has basis $d_1,\ldots, d_\ell$.}
\end{array}
\end{equation}
Define $\alpha_1,\ldots, \alpha_n\in \fh^*$ by
\begin{equation}
\alpha_i(h_j) = a_{ij}
\qquad\hbox{and}\qquad
\alpha_i(d_j) = \delta_{i,r+j},
\end{equation}
and let
\begin{equation}\label{fcdef}
\bar\fh' = \fh'/\fc,
\qquad\hbox{where}\qquad
\fc = \{ h\in \fh'\ |\ \hbox{$\alpha_i(h)=0$ for all $1\le i\le n$}\}.
\end{equation}
Let $c_1\ldots, c_\ell\in \fh'$ be a basis of $\fc$ so that $h_1,\ldots, h_r, c_1,\ldots, c_\ell,d_1,\ldots,d_{\ell}$ is another basis of $\fh$ and define $\kappa_1,\ldots, \kappa_\ell\in \fh^*$
by
\begin{equation}
\kappa_i(h_j)=0,
\qquad
\kappa_i(c_j) = \delta_{ij},
\qquad\hbox{and}\qquad
\kappa_i(d_j) = 0.
\end{equation}
Then $\alpha_1,\ldots, \alpha_n, \kappa_1,\ldots, \kappa_\ell$ form a basis of $\fh^*$.

Let $\fa$ be the Lie algebra given by generators $\fh, e_1,\ldots, e_n, f_1,\ldots, f_n$ and 
relations
\begin{equation}
[h,h'] = 0, \qquad
[e_i, f_j] = \delta_{ij}h_i, \qquad
[h,e_i] = \alpha_i(h) e_i, \qquad
[h, f_i] = -\alpha_i(h) f_i,
\end{equation}
for $h,h'\in \fh$ and $1\leq i,j\leq n$.  The \emph{Borcherds-Kac-Moody Lie algebra} of $A$ is 
\begin{equation}\label{Serreideal}
\fg = \frac{\fa}{\fr},
\qquad\hbox{where}\quad
\hbox{ $\fr$ is a the largest ideal of $\fa$ such that $\fr\cap \fh=0$.}
\end{equation}
The Lie algebra $\fa$ is graded by 
\begin{equation}
Q = \sum_{i=1}^{n} \ZZ \alpha_i,
\qquad\hbox{by setting}\quad
\deg(e_i)=\alpha_i,\ \ \deg(f_i) = -\alpha_i,\ \ \deg(h)=0,
\end{equation}
for $h\in \fh$.  Any ideal of $\fa$ is $Q$-graded and so $\fg$ is $Q$-graded
(see \cite[(1.6)]{Mac3} or \cite[p.\ 81]{Mac4}),
\begin{align}
\label{rootdef}
\begin{aligned}
\fg = \fg_0 \oplus \left(\bigoplus_{\alpha\in R} \fg_\alpha\right),
\qquad\hbox{where}\quad
\fg_{\alpha}=\{x\in\fg\mid[h,x]=\alpha(h)x\},\quad\textrm{and}\\
R = \{ \alpha\ |\ \hbox{$\alpha\ne 0$ and $\fg_\alpha\ne 0$}\}
\qquad\textrm{is the set of \emph{roots} of $\fg$.}
\end{aligned}
\end{align}
The \emph{multiplicity} of a root $\alpha\in R$ is 
$\dim(\fg_\alpha)$ and the decomposition of $\fg$ in \eqref{rootdef} is the decomposition
of $\fg$ as an $\fh$-module (under the adjoint action).  If
$$\begin{array}{l}
\hbox{$\fn^+$ is the subalgebra generated by $e_1,\ldots, e_n$, and} \\
\hbox{$\fn^-$ is the subalgebra generated by $f_1,\ldots, f_n$,} 
\end{array}
$$
then (see \cite[p.\ 83]{Mac4} or \cite[\S 1.3]{Kac})
\begin{equation}\label{KMtridecomp}
\fg = \fn^-\oplus\fh \oplus \fn^+
\qquad\hbox{and}\qquad
\fh = \fg_0, \quad
\fn^+ = \bigoplus _{\alpha\in R^+} \fg_\alpha,
\qquad
\fn^- = \bigoplus_{\alpha\in R^+} \fg_{-\alpha},
\end{equation}
where
\begin{equation}
R^+ = Q^+\cap R
\qquad\hbox{with}\qquad
Q^+ = \sum_{i=1}^n \ZZ_{\ge 0}\alpha_i.
\end{equation}

Let $\fc$ and $\fd$ be as in \eqref{fhdef} and \eqref{fcdef}.  Then 
$$\hbox{$\fd$ acts on $\fg'=[\fg,\fg]$ by derivations,}
\qquad
\fc = Z(\fg) = Z(\fg'),$$
\begin{equation}\label{KMLiealgs}
\begin{array}{l}
\fg= \fn^- \oplus \fh \oplus \fn^+ = \fa/\fr = \fg' \rtimes \fd,
\\ \\
\fg' 
= \fn^- \oplus \fh' \oplus \fn^+
= [\fg, \fg], 
\\ \\
\bar\fg' 
= \fn^-\oplus \bar\fh' \oplus \fn^+
= \fg'/\fc, 
\end{array}
\end{equation}
and $\fg'$ is the universal central extension of $\fg'$ (see 
\cite[Ex. 3.14]{Kac}). 

\subsection{Cartan matrices, $\fsl_2$ subalgebras and the Weyl group}

A \emph{Cartan matrix} is an $n\times n$ matrix $A = (a_{ij})$ such that
\begin{equation}
a_{ij}\in \ZZ,\qquad
a_{ii} = 2, \qquad
\hbox{$a_{ij} \le 0$ if $i\ne j$},
\qquad
\hbox{$a_{ij}\ne 0$ if and only if $a_{ji}\ne 0$}.
\end{equation}
When $A$ is a Cartan matrix the Lie algebra $\fg$ contains many subalgebras
isomorphic to $\fsl_2$.  For $1\le i\le n$, the elements $e_i$ and $f_i$ act locally
nilpotently on $\fg$ (see \cite[p.\ 85]{Mac4} or \cite[(1.19)]{Mac3} or \cite[Lemma~3.5]{Kac}),
\begin{equation}
\mathrm{span}\{e_i,f_i,h_i\} \cong \fsl_2,
\qquad\hbox{and}\qquad
\tilde s_i = \exp(\ad e_i)\exp(-\ad f_i)\exp(\ad e_i)
\end{equation}
is an automorphism of $\fg$ (see \cite[Lemma~3.8]{Kac}).  Thus $\fg$ has lots of symmetry.

The \emph{simple reflections} $s_i\colon \fh^*\to \fh^*$ and $s_i\colon \fh \to \fh$ are given by
\begin{equation}\label{si}
s_i\lambda = \lambda - \lambda(h_i)\alpha_i
\qquad\hbox{and}\qquad
s_ih = h- \alpha_i(h)h_i,
\qquad\hbox{for $1\le i\le n$,}
\end{equation}
$\lambda\in \fh^*$, $h\in \fh$, and 
$$\tilde s_i \fg_\alpha = \fg_{s_i\alpha}
\qquad\hbox{and}\qquad
\tilde s_i h = s_i h,
\qquad\hbox{for $\alpha\in R,\ \  h\in \fh$}.
$$
The \emph{Weyl group} $W$ is the subgroup of $GL(\fh^*)$ (or $GL(\fh)$) generated by the simple reflections.
The simple reflections on $\fh$ are reflections in the hyperplanes
\begin{equation*}
\fh^{\alpha_i} = \{ h\in \fh\ |\ \alpha_i(h) = 0\},
\qquad\hbox{and}\qquad
\fc =\fh^W = \bigcap_{i=1}^n \fh^{\alpha_i}.
\end{equation*}
The representation of $W$ on $\fh$ and $\fh^*$ are dual so that
$$\lambda(wh) = (w^{-1}\lambda)(h),
\qquad \hbox{for $w\in W$, $\lambda\in \fh^*$, $h\in \fh$.}
$$
The group $W$ is presented by generators $s_1,\ldots, s_n$ and relations
\begin{equation}\label{Wpres}
s_i^2 = 1
\qquad\hbox{and}\qquad
\underbrace{s_is_js_i\cdots}_{m_{ij}\ \mathrm{factors}}
=\underbrace{s_js_is_j\cdots}_{m_{ij}\ \mathrm{factors}}
\end{equation}
for pairs $i\ne j$ such that $a_{ij}a_{ji}<4$, where $m_{ij}=2,3,4,6$ if
$a_{ij}a_{ji}=0,1,2,3$, respectively (see \cite[(2.12)]{Mac3} or \cite[Prop.\,3.13]{Kac}).

The \emph{real roots} of $\fg$ are the elements of the set
\begin{equation}
R_{\mathrm{re}} = \bigcup_{i=1}^n W \alpha_i,
\qquad\hbox{and}\qquad
R_{\mathrm{im}} = R \backslash R_{\mathrm{re}}
\end{equation}
is the set of \emph{imaginary roots} of $\fg$.  If $\alpha = w\alpha_i$ is a real root then
there is a subalgebra isomorphic to $\fsl_2$ spanned by
\begin{equation}\label{alphasl2}
e_\alpha = \tilde w e_i,\ \ 
f_\alpha = \tilde w f_i,\ \
\textrm{and}\ \ 
h_\alpha = \tilde w h_i,
\end{equation}
and $s_\alpha = ws_iw^{-1}$ is a reflection in $W$ acting on $\fh$ and $\fh^*$ by
\begin{equation}\label{reflections}
s_\alpha\lambda = \lambda- \lambda(h_\alpha)\alpha
\qquad\hbox{and}\qquad
s_\alpha h = h - \alpha(h) h_\alpha,\qquad\hbox{respectively.}
\end{equation}

Let $\fh_{\RR}=\RR\hbox{-span}\{h_1,\ldots, h_n, d_1,\ldots, d_\ell\}$.  The group $W$ 
acts on $\fh_{\RR}$ and the \textit{dominant chamber} 
\begin{equation}\label{domchamber}
C=\{\la^{\vee}\in\fh_{\RR}\mid\langle \alpha_i,\la^{\vee}\rangle\geq0\textrm{ for all $1\leq i\leq n$}\}
\end{equation}
is a fundamental domain for the action of $W$ on the \emph{Tits cone}
\begin{equation}\label{Titscone}
X=\bigcup_{w\in W}wC=\{h\in\fh_{\RR}\mid\langle\alpha,h\rangle<0
\textrm{ for a finite number of $\alpha\in R^+$}\}.
\end{equation}
$X=\fh_{\RR}$ if and only if $W$ is finite (see \cite[Prop.\,3.12]{Kac} and \cite[(2.14)]{Mac3}).

\subsection{Symmetrizable matrices and invariant forms}

A \emph{symmetrizable matrix} is a matrix $A = (a_{ij})$ such that there exists a diagonal
matrix 
\begin{equation}\label{symmetrizable}
\cE = \mathrm{diag}(\epsilon_1,\ldots, \epsilon_n),\ \ \epsilon_i\in \RR_{>0},
\qquad\hbox{such that}\qquad
\hbox{$A \cE$ is symmetric}.
\end{equation}

If $\langle,\rangle\colon \fg\times \fg\to \CC$ is a $\fg$-invariant symmetric bilinear form then
$$\langle h_i ,h \rangle
=\langle [e_i,f_i], h\rangle
= - \langle f_i, [e_i,h]\rangle
= \langle f_i, \alpha_i(h)e_i\rangle
= \alpha_i(h)\langle e_i, f_i\rangle,
$$
so that
\begin{equation}\label{invform}
\langle h_i, h\rangle = \alpha_i(h) \epsilon_i,
\qquad\hbox{where}\qquad
\epsilon_i = \langle e_i, f_i\rangle.
\end{equation}
Conversely, if $A$ is a symmetrizable matrix then there is a nondegenerate invariant
symmetric bilinear form on $\fg$ determined by the formulas in \eqref{invform} (see
\cite[(3.12)]{Mac3} or \cite[Theorem~2.2]{Kac}).

If $A$ is a Cartan matrix and 
$\langle,\rangle\colon \fh\times \fh\to \CC$ is a $W$-invariant symmetric bilinear form then
$$\langle h_i,h\rangle
= - \langle s_ih_i, h\rangle
= - \langle h_i, s_ih\rangle
= -\langle h_i, h-\alpha_i(h)h_i\rangle
= - \langle h_i, h\rangle + \alpha_i(h)\langle h_i,h_i\rangle,
$$
so that
\begin{equation}\label{Winvtform}
\langle h_i,h\rangle = \alpha_i(h)\epsilon_i, 
\qquad\hbox{where}\qquad
\epsilon_i = \hbox{$\frac12$}\langle h_i,h_i\rangle.
\end{equation}
In particular, $\alpha_i(h_j)\epsilon_i = \langle h_i, h_j\rangle
=\langle h_j, h_i\rangle = \alpha_j(h_i)\epsilon_j$ so that $A$ is symmetrizable.
Conversely, if $A$ is a symmetrizable Cartan matrix then there is a nondegenerate
$W$-invariant symmetric bilinear form on $\fh$ determined by the formulas in \eqref{Winvtform}
(see \cite[(2.26)]{Mac3}).

If $x_\alpha\in \fg_\alpha$, $y_\alpha\in \fg_{-\alpha}$ then
$[x_\alpha, y_\alpha]\in [\fg_\alpha, \fg_{-\alpha}]\subseteq \fg_0 = \fh$
and 
$\langle h, [x_\alpha, y_\alpha]\rangle
= - \langle [x_\alpha,h], y_\alpha\rangle
= \alpha(h) \langle x_\alpha, y_\alpha\rangle,
$
so that
\begin{equation}\label{bracketform}
[x_\alpha, y_\alpha] = \langle x_\alpha, y_\alpha\rangle h_\alpha^\vee,
\qquad\hbox{where $\langle h, h_\alpha^\vee\rangle = \alpha(h)$ for all $h\in \fh$}
\end{equation}
determines $h^\vee_\alpha\in \fh$. 
If $\alpha\in R_{\mathrm{re}}$ and $e_\alpha, f_\alpha, h_\alpha$ are as in \eqref{alphasl2} then 
\begin{equation}\label{halphavee}
h_\alpha = [e_\alpha,f_\alpha] = \langle e_\alpha,f_\alpha\rangle h_\alpha^\vee
\qquad\hbox{and}\qquad
\langle e_\alpha, f_\alpha\rangle = \hbox{$\frac12$}\langle h_\alpha, h_\alpha\rangle.
\end{equation}
Let
\begin{equation}\label{alphavee}
\alpha^\vee = \langle e_\alpha, f_\alpha\rangle\alpha 
= \hbox{$\frac12$}\langle h_\alpha, h_\alpha\rangle\alpha
\qquad\hbox{so that}\qquad
\alpha^\vee(h) 
= \langle h, h_\alpha\rangle.
\end{equation}
Use the vector space isomorphism 
\begin{equation}
\begin{matrix}
\fh &\mapright{\sim} &\fh^* \\
h &\longmapsto &\langle h, \cdot\rangle \\
h_\alpha &\longmapsto &\alpha^\vee \\
h_\alpha^\vee &\longmapsto &\alpha
\end{matrix}
\qquad
\hbox{to identify}\quad
Q^\vee = \sum_{i=1}^n \ZZ h_i
\quad\hbox{and}\quad
Q^* = \sum_{i=1}^n \ZZ\alpha_i^\vee
\end{equation}
and write
\begin{equation}
\langle \lambda^\vee, \mu\rangle = \mu(h_\lambda)
\qquad\hbox{if}\quad
\lambda^\vee = \lambda_1\alpha_1^\vee+\cdots+\lambda_n\alpha_n^\vee
\quad\hbox{and}\quad
h_\lambda =\lambda_1h_1+\cdots +\lambda_nh_n.
\end{equation}


\section{Steinberg-Chevalley groups}

This section gives a brief treatment of the theory of Chevalley groups.  The
primary reference is \cite{St} and the extensions to the Kac-Moody case are 
found in \cite{Ti1}.

Let $A$ be a Cartan matrix and let $R_{\mathrm{re}}$ be the real roots of the corresponding
Borcherds-Kac-Moody Lie algebra $\fg$.  Let $U$ be the enveloping algebra of $\fg$.
For each $\alpha\in R_{\mathrm{re}}$ fix a choice of 
$e_\alpha$ in \eqref{alphasl2} (a choice of $\tilde w$).  Use the notation
$$x_\alpha(t) = \exp(te_\alpha) = 1+e_\alpha+\frac{1}{2!}t^2e_\alpha^2
+\frac{1}{3!}t^3e_\alpha^3+\cdots,
\qquad\hbox{in $U[[t]]$.}
$$
Then
$$x_\alpha(t)x_\alpha(u) = x_{\alpha}(t+u)\qquad\hbox{in $U[[t,u]]$.}$$
Following \cite[3.2]{Ti1}, 
a \textit{prenilpotent pair} is a pair of roots $\alpha,\beta\in R_{\mathrm{re}}$ 
such that there exists $w,w'\in W$ with 
$$w\alpha,w\beta\in R_{\mathrm{re}}^+
\qquad\hbox{and}\qquad
w'\alpha,w'\beta\in-R_{\mathrm{re}}^+.$$
This condition guarantees that the Lie subalgebra of $\fg$ generated by 
$\fg_{\alpha}$ and $\fg_{\beta}$ is nilpotent.  Let $\alpha,\beta$ be a prenilpotent pair
and let $e_\alpha\in \fg_\alpha$ and 
$e_\beta\in \fg_\beta$ be as in \eqref{alphasl2}.  By \cite[Lemma 15]{St}
there are unique integers 
$C^{i,j}_{\alpha\beta}$ such that 
\begin{equation*}
x_\alpha(t)x_\beta(u)
= x_\beta(u)x_\alpha(t)x_{\alpha+\beta}(C_{\alpha,\beta}^{1,1}tu) 
x_{2\alpha+\beta}(C^{2,1}_{\alpha,\beta}t^2u)
x_{\alpha+2\beta}(C^{1,2}_{\alpha,\beta}ut^2)\cdots .
\end{equation*}

Let $\FF$ be a commutative ring.
The \textit{Steinberg group}  
$$\hbox{$\mathrm{St}$ 
is given by generators $x_{\alpha}(f)$ for $\alpha\in R_{\mathrm{re}}$, $f\in \FF$,}$$
and relations
\begin{equation}
x_{\alpha}(f_1)x_{\alpha}(f_2)=x_{\alpha}(f_1+f_2),
\qquad\hbox{for $\alpha\in R_{\mathrm{re}}$,\quad and}
\end{equation}
\begin{equation}\label{commutatorreln}
x_\alpha(f_1)x_\beta(f_2)
= x_\beta(f_2)x_\alpha(f_1)x_{\alpha+\beta}(C_{\alpha,\beta}^{1,1}f_1f_2) x_{2\alpha+\beta}(C^{2,1}_{\alpha,\beta}f_1^2f_2)
x_{\alpha+2\beta}(C^{1,2}_{\alpha,\beta}f_1f_2^2)\cdots 
\end{equation}
for prenilpotent pairs $\alpha,\beta$.
In $\mathrm{St}$ define
\begin{equation}\label{nalphadefn}
n_{\alpha}(g)=x_{\alpha}(g)x_{-\alpha}(-g^{-1})x_{\alpha}(g),
\quad
n_\alpha = n_\alpha(1),
\quad\textrm{and}\quad 
h_{\alpha^{\vee}}(g)=n_{\alpha}(g)n_{\alpha}^{-1},
\end{equation}
for $\alpha\in R_{\mathrm{re}}$ and $g\in\FF^{\times}$.

Let $\fh_\ZZ$ be a $\ZZ$-lattice in $\fh$ which is stable under the $W$-action and such that
$$\fh_\ZZ \supseteq Q^\vee,
\qquad\hbox{where}\quad
Q^\vee = \ZZ\hbox{-span}\{h_1,\ldots, h_n\}$$
with $h_1,\ldots, h_n$ as in \eqref{fhdef}.  
With
$$\hbox{$T$ given by generators $h_{\la^{\vee}}(g)$ 
for $\la^{\vee}\in \fh_\ZZ$, $g\in\FF^{\times}$,\quad
and relations}$$
\begin{equation}\label{Tdefn}
h_{\la^{\vee}}(g_1)h_{\la^{\vee}}(g_2)=h_{\la^{\vee}}(g_1g_2)
\qquad\hbox{and}\qquad
h_{\la^{\vee}}(g)h_{\mu^{\vee}}(g)=h_{\la^{\vee}+\mu^{\vee}}(g),
\end{equation}
the \textit{Tits group}
$$\hbox{$G$ is the 
group generated by $St$ and $T$}$$
with the relations coming from the third
equation in \eqref{nalphadefn} and the additional relations
\begin{equation}\label{Titsgroup}
h_{\la^{\vee}}(g)x_{\alpha}(f)h_{\la^{\vee}}(g)^{-1}
=x_{\alpha}(g^{\langle\la^{\vee},\alpha\rangle}f) 
\qquad\hbox{and}\qquad
n_ih_{\la^{\vee}}(g)n_i^{-1}=h_{s_i\la^{\vee}}(g). 
\end{equation}

For $\alpha,\beta\in R_{\mathrm{re}}$ let $\epsilon_{\alpha\beta}=\pm1$  be given by
$$
\tilde s_{\alpha}(e_{\beta})=\epsilon_{\alpha\beta} e_{s_{\alpha}\beta},
\qquad\textrm{where}\qquad 
\tilde s_{\alpha} = \exp(\ad e_\alpha)\exp(-\ad f_\alpha)\exp(\ad e_\alpha)
$$
(see \cite[p.48]{CC} and \cite[(3.3)]{Ti1}).  
By \cite[Lemma~37]{St} (see also \cite[\S 3.7(a)]{Ti1}) 
\begin{equation}
\label{eq:R4}
n_{\alpha}(g)x_{\beta}(f)n_{\alpha}(g)^{-1}
=x_{s_{\alpha}\beta}(\epsilon_{\alpha\beta} g^{-\langle\beta,\alpha^{\vee}\rangle}f),  
\qquad
h_{\lambda^{\vee}}(g)x_{\beta}(f)h_{\lambda^{\vee}}(g)^{-1}
=x_{\beta}(g^{\langle\beta,\lambda^{\vee}\rangle}f),
\end{equation}
\begin{equation}
\label{eq:R5}
\hbox{and}\quad
n_{\alpha}(g)h_{\lambda^{\vee}}(g')n_{\alpha}(g)^{-1}
=h_{s_{\alpha}\lambda^\vee}(g').
\end{equation}
Thus
$G$ has a symmetry under the subgroup
\begin{equation}\label{Ndefn}
\hbox{$N$ generated by $T$ and the $n_{\alpha}(g)$ 
for $\alpha\in R_{\mathrm{re}}, g\in\FF^{\times}.$}
\end{equation}
If $\FF$ is big enough then $N$ is the normalizer of $T$ in $G$ \cite[Ex.\,(b) p.\,36]{St} and, by 
\cite[Lemma 27]{St},  the homomorphism
\begin{equation}\label{NtoW}
\begin{matrix}
N &\longrightarrow &W\\
n_\alpha(g) &\longmapsto &s_\alpha
\end{matrix}
\qquad\hbox{is surjective with kernel $T$.}
\end{equation}

\begin{remark} \cite[\S 3.7(b)]{Ti1}
If $\fh_\ZZ = Q^\vee$ and the first relation of \eqref{Titsgroup} holds in $\mathrm{St}$ then
there is a surjective homomorphism $\psi\colon \mathrm{St}\twoheadrightarrow G$.
By \cite[Lemma 22]{St}, the elements 
$$
n_{\alpha}h_{\la^{\vee}}(g)n_{\alpha}^{-1}h_{s_{\alpha}\la^{\vee}}(g)^{-1}
\qquad\textrm{and}\qquad n_{\alpha}(g)n_{\alpha}^{-1}h_{\alpha^{\vee}}(g)^{-1}
$$
automatically commute with each $x_{\beta}(f)$ so that $\ker(\psi)\subseteq Z(\mathrm{St})$. 
In many cases $\mathrm{St}$ is the universal central extension of $G$ (see \cite[3.7(c)]{Ti1} and \cite[Theorems~10,11,12]{St}).
\end{remark}

\begin{remark}\label{Chevgp}
The algebra $\fg'=[\fg,\fg]$ in \eqref{KMLiealgs} is generated by
$e_\alpha$, $\alpha\in R_{\mathrm{re}}$.
A $\fg'$-module $V$ is \textit{integrable} 
if $e_{\alpha}$, $\alpha\in R_{\mathrm{re}}$, act locally nilpotently so that
\begin{equation}\label{xalphaops}
x_{\alpha}(c)=\exp(ce_{\alpha}),
\qquad\textrm{for $\alpha\in R_{\mathrm{re}}$, $c\in \CC$},
\end{equation}
are well defined operators on $V$.  The
\emph{Chevalley group} 
$G_V$ is the subgroup of $GL(V)$ generated by 
the operators in \eqref{xalphaops}.
To do this integrally use a Kostant $\ZZ$-form and choose a 
lattice in the module $V$ (see \cite[\S 4.3-4]{Ti1} and \cite[Ch.\,1]{St}).
The \textit{Kac-Moody group} is the group $G_{KM}$ generated by symbols
$$
x_{\alpha}(c),\quad \alpha\in R_{\mathrm{re}}, c\in\CC,
\qquad\textrm{with relations}\qquad 
x_{\alpha}(c_1)x_{\alpha}(c_2)=x_{\alpha}(c_1+c_2)
$$
and the additional relations coming from forcing an element to be $1$ if it acts by $1$ on \emph{every} integrable $\fg'$ module.  This is essentially the Chevalley group $G_V$ for the case when $V$ is the
adjoint representation and so $G_{KM}\subseteq\Aut(\fg')$.  
There are surjective homomorphisms
$$\mathrm{St}(\CC)\twoheadrightarrow G_{KM} \twoheadrightarrow G_V.$$
See \cite[Exercises 3.16-19]{Kac} and \cite[Proposition~1]{Ti1}.
\end{remark}

\begin{remark} \cite[Lemma 28]{St} In the setting of Remark \ref{Chevgp}
let $T_V$ be the subgroup of $G_V$ generated by
$h_{\alpha^{\vee}}(g)$ for $\alpha\in R_{\mathrm{re}}, g\in\FF^{\times}$.
Then  
\begin{align*}
h_{\alpha_1^{\vee}}(g_1)\cdots h_{\alpha_n^{\vee}}(g_n)&=1
\quad\textrm{if and only if}\quad 
g_1^{\langle\mu,\alpha_1^{\vee}\rangle}\cdots g_n^{\langle\mu,\alpha_n^{\vee}\rangle}=1\quad\textrm{for all weights $\mu$ of $V$},\\
Z(G_V)
&=\{h_{\alpha_1^{\vee}}(g_1)\cdots h_{\alpha_n^{\vee}}(g_n)\mid g_1^{\langle\beta,\alpha_1^{\vee}\rangle}\cdots g_n^{\langle\beta,\alpha_n^{\vee}\rangle}=1\quad\textrm{for all $\beta\in R$}\},
\end{align*}
and if $\FF$ is big enough
$$
T_V=\{h_{\omega_1^{\vee}}(g_1)\cdots h_{\omega_n^{\vee}}(g_n)\mid g_1,\ldots,g_n\in\FF^{\times}\},
$$
where $\omega_1^{\vee},\ldots,\omega_n^{\vee}$ is a $\ZZ$-basis of the $\ZZ$-span of the weights of $V$ \cite[Lemma 35]{St}.
\end{remark}

\section{Labeling points of the flag variety $G/B$}

In this section we follow \cite[Ch.\,8]{St} to show that the points of the flag variety
are naturally indexed by labeled walks.  This is the first step in making a precise connection
between the points in the flag variety and the alcove walk theory in \cite{Ra}.

Let $G$ be a Tits group as in \eqref{Titsgroup} over the field $\FF=\CC$.  
The \textit{root subgroups}
\begin{equation}\label{rootsubgroups}
\cX_{\alpha}=\{x_{\alpha}(c)\mid c\in\CC\},\ \  \textrm{for $\alpha\in R_{\mathrm{re}}$,}
\qquad\hbox{satisfy}\qquad
w\cX_{\beta}w^{-1}=\cX_{w\beta},
\end{equation}
for $w\in W$ and $\beta\in R_{\mathrm{re}}$,
since $h_{\alpha^{\vee}}(c)\cX_{\beta}h_{\alpha^{\vee}}(c)^{-1}=\cX_{\beta}$ and $n_{\alpha}\cX_{\beta}n_{\alpha}^{-1}=\cX_{s_{\alpha}\beta}$.
As a group $\cX_{\alpha}$ is isomorphic to $\CC$ (under addition). 

The \textit{flag variety} is $G/B$, where the subgroup
\begin{equation}\label{Borel}
\hbox{$B$ is generated by $T$ and $x_{\alpha}(f)$ for $\alpha\in R_{\mathrm{re}}^+$, 
$f\in\CC$.}
\end{equation}
Let $w\in W$. The \textit{inversion set of $w$} is
\begin{equation}\label{invset}
R(w)=\{\alpha\in R_{\mathrm{re}}^+\mid w^{-1}\alpha\notin R_{\mathrm{re}}^+\}
\qquad\textrm{and}\qquad 
\ell(w)=\Card(R(w))
\end{equation}
is the \textit{length of $w$}. View a reduced expression 
$\vec w=s_{i_1}\cdots s_{i_{\ell}}$ in the generators in \eqref{Wpres} as
a \emph{walk} in $W$ starting at $1$ and ending at $w$,  
\begin{equation}\label{walkseq}
1\quad\longrightarrow\quad s_{i_1}\quad\longrightarrow\quad s_{i_1}s_{i_2}\quad\longrightarrow\quad\cdots\quad\longrightarrow\quad s_{i_1}\cdots s_{i_{\ell}}=w.
\end{equation}
Letting $x_i(c) =x_{\alpha_i}(c)$ and $n_i= n_{\alpha_i}(1)$, 
the following theorem shows that 
\begin{equation}\label{stepform}
BwB = \{ x_{i_1}(c_1)n_{i_1}^{-1}x_{i_2}(c_2)n_{i_2}^{-1}\cdots
x_{i_\ell}(c_\ell)n_{i_\ell}^{-1}B\ |\ c_1,\ldots, c_\ell\in \CC\}
\end{equation}
so that the $G/B$-points of $BwB$ are in bijection with labelings of the 
edges of the walk by complex numbers $c_1,\ldots, c_\ell$.
The elements of $R(w)$ are 
\begin{equation}\label{rootseq}
\beta_1=\alpha_{i_1},\quad \beta_2=s_{i_1}\alpha_{i_2},\quad\ldots,\quad \beta_{\ell}=s_{i_1}\cdots s_{i_{\ell-1}}\alpha_{i_{\ell}},
\end{equation}
and the first relation in \eqref{eq:R4} gives
\begin{equation}
x_{i_1}(c_1)n_{i_1}^{-1}x_{i_2}(c_2)n_{i_2}^{-1}\cdots
x_{i_\ell}(c_\ell)n_{i_\ell}^{-1}
=x_{\beta_1}(\pm c_1)\cdots x_{\beta_\ell}(\pm c_\ell)n_w,
\end{equation}
where $n_w = n_{i_1}^{-1}\cdots n_{i_\ell}^{-1}$.

\begin{thm}\label{thm:BwB} \emph{\cite[Thm.\,15 and Lemma 43]{St}} Let $w\in W$ 
and let $n_w$ be a representative of~$w$ in~$N$.
If
$$R(w) = \{ \beta_1,\ldots, \beta_\ell\}
\qquad\hbox{then}\qquad
\{x_{\beta_1}(c_1)\cdots x_{\beta_{\ell}}(c_{\ell})n_w\mid c_1,\ldots,c_\ell\in
\CC\}$$
is a set of representatives of the $B$-cosets in
$BwB$.
\end{thm}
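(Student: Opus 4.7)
I would prove the theorem by induction on $\ell=\ell(w)$ using a reduced expression $w=s_{i_1}\cdots s_{i_\ell}$. The base case $w=1$ is trivial. The strategy is to first establish the walk form \eqref{stepform} and then transport it to the root form of the statement; uniqueness is the real content.

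For the walk form, I would combine two ingredients. First, a rank-one calculation: $Bs_iB=\{x_i(c)n_i^{-1}B\mid c\in\CC\}$, and these cosets are pairwise distinct because $x_i(c_1)n_i^{-1}B=x_i(c_2)n_i^{-1}B$ would force $n_i x_i(c_1-c_2)n_i^{-1}\in B$, which by \eqref{eq:R4} means $x_{-\alpha_i}(\pm(c_1-c_2))\in B$ and hence $c_1=c_2$. Second, the multiplication rule $BwB=Bw'B\cdot Bs_{i_\ell}B$ whenever $\ell(w)=\ell(w')+1$, which follows from the BN-pair axioms one extracts from \eqref{eq:R4}. Combining these inductively produces \eqref{stepform}.

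Next I would convert \eqref{stepform} into the root form. Using the first identity of \eqref{eq:R4}, each $n_{i_1}^{-1}\cdots n_{i_{j-1}}^{-1}$ conjugates $x_{i_j}(c_j)$ into $x_{\beta_j}(\pm c_j)$, where $\beta_j=s_{i_1}\cdots s_{i_{j-1}}\alpha_{i_j}$ is the sequence from \eqref{rootseq}. Sweeping all the $n_{i_k}^{-1}$ factors to the right and absorbing the signs $\epsilon_{\cdot,\cdot}$ into a redefinition of the parameters $c_j$ puts every element of $BwB$ in the form $x_{\beta_1}(c_1)\cdots x_{\beta_\ell}(c_\ell)n_w$ with $n_w=n_{i_1}^{-1}\cdots n_{i_\ell}^{-1}$, so such elements represent every $B$-coset of $BwB$.

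The main obstacle is uniqueness: different tuples must yield distinct cosets. Suppose $(c_j)$ and $(c_j')$ give the same coset. Then
$$u=\bigl(x_{\beta_1}(c_1')\cdots x_{\beta_\ell}(c_\ell')\bigr)^{-1}x_{\beta_1}(c_1)\cdots x_{\beta_\ell}(c_\ell)\in n_wBn_w^{-1}.$$
A combinatorial fact about inversion sets, namely that $R(w)$ is closed under the $\ZZ_{\ge0}$-combinations arising in the commutator relation \eqref{commutatorreln}, lets one show via those relations that $U_w:=\cX_{\beta_1}\cdots\cX_{\beta_\ell}$ is a subgroup and that the product map $\CC^\ell\to U_w$ is bijective; in particular $u\in U_w$. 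On the other hand $n_wBn_w^{-1}=T\cdot\prod_{\alpha\in wR_{\mathrm{re}}^+}\cX_\alpha$, and by the very definition of $R(w)$ none of the $\beta_j$ lies in $wR_{\mathrm{re}}^+$. Hence $U_w\cap n_wBn_w^{-1}=\{1\}$, so $u=1$ and $c_j=c_j'$ for all $j$. The hard step is the closedness and bijectivity of the product parametrization of $U_w$, which is the heart of \cite[Lemma 43]{St}.
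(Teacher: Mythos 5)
Your surjectivity argument (base case for $s_j$, the product rule $BwB\cdot Bs_jB\supseteq Bws_jB$, then conjugating the $x_{i_j}(c_j)$ past the $n_{i_k}^{-1}$ via \eqref{eq:R4} to reach the root form) is essentially the paper's Step 1, and your appeal to the closedness of $R(w)$ and \cite[Lemma 17]{St} for reordering matches the paper's Step 4. The divergence, and the problem, is in the uniqueness step.

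The paper proves uniqueness by first establishing disjointness of the Bruhat cells ($BwB=BvB$ iff $w=v$, its Step 2, by induction using $BvB\cdot Bs_jB\subseteq BvB\cup Bvs_jB$) and then arguing coordinate by coordinate: if $c_1'\ne c_1$ the two sides would land in Bruhat cells of different lengths. You instead reduce uniqueness to the claim $U_w\cap n_wBn_w^{-1}=\{1\}$, and here there is a genuine gap. You justify the trivial intersection by saying that none of the $\beta_j$ lies in $wR_{\mathrm{re}}^+$; but knowing that the two groups are generated by disjoint collections of root subgroups does not imply that they intersect trivially. Unwinding your claim, it is equivalent to $U^-\cap TU^+=\{1\}$ (a Gauss/Birkhoff-type uniqueness statement), which is a theorem requiring an actual input --- typically a faithful integrable highest weight representation, as in the Remark the paper places right after the theorem, or else the very disjointness of Bruhat cells that the paper proves as its Step 2. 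Moreover, your identity $n_wBn_w^{-1}=T\cdot\prod_{\alpha\in wR_{\mathrm{re}}^+}\cX_\alpha$ asserts an ordered product decomposition of $B$ over the possibly infinite set $R_{\mathrm{re}}^+$; this is precisely the subtlety the paper flags at the start of its proof as the reason the ``conceptual'' one-line computation is not a proof in the Kac--Moody setting. So the route can be completed, but not for the reason you give: you must either import a weight-space argument on an integrable module to get $U^-\cap TU^+=\{1\}$, or fall back on the cell-disjointness induction, at which point you have reproduced the paper's Step 2--3.
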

\begin{proof}
The conceptual reason for this is that 
\begin{align*}
BwB &= \left(\prod_{\alpha\in R^+_{\mathrm{re}}} \cX_\alpha\right)n_w B
= n_w 
\left(\prod_{w^{-1}\alpha\not\in R^+_{\mathrm{re}}} \cX_{w^{-1}\alpha}\right)
\left(\prod_{w^{-1}\alpha\in R^+_{\mathrm{re}}} \cX_{w^{-1}\alpha}\right)
B  \\
&= n_w 
\left(\prod_{w^{-1}\alpha\not\in R^+_{\mathrm{re}}} \cX_{w^{-1}\alpha}\right)
B 
=\left(\prod_{\alpha\in R(w)} \cX_{\alpha}\right) n_wB \\
&= \{ x_{\beta_1}(c_1)\cdots x_{\beta_\ell}(c_\ell)n_wB\ |\ c_1,\ldots, c_\ell\in \FF \}.
\end{align*}
Since $R_{\mathrm{re}}^+$ may be infinite there is a subtlety in the decomposition
and ordering of the product  of $\cX_\alpha$ in the second ``equality'' and it is necessary
to proceed more carefully. Choose a reduced decomposition $w=s_{i_1}\cdots s_{i_{\ell}}$ and let $\beta_1,\ldots,\beta_{\ell}$ be the ordering of $R(w)$ from (\ref{rootseq}).

\smallskip\noindent
Step 1: Since $R(w)\subseteq R^+_{\mathrm{re}}$  there is an inclusion
$$\{x_{\beta_1}(c_1)\cdots x_{\beta_{\ell}}(c_{\ell})n_wB\mid c_1,\ldots, c_\ell\in
\CC\}\subseteq BwB.$$ 
To prove equality proceed by induction on $\ell$.  

\emph{Base case:} Suppose that $w=s_j$. 
Let $\alpha\in R_{\mathrm{re}}^+$ and $c,d\in \CC$. 
If $c=0$ or $\alpha,\alpha_j$ is a prenilpotent pair then, by relation \eqref{commutatorreln}, 
\begin{align}\label{eq:Binv}
x_{\alpha}(d)x_{\alpha_j}(c)n_j^{-1}B=x_{\alpha_j}(c')n_j^{-1}B,
\qquad\textrm{for some $c'\in\CC$}.
\end{align}
If $\alpha,\alpha_j$ is not a prenilpotent pair and $c\neq 0$ then 
$\alpha,-\alpha_j$ is a prenilpotent pair and, by \eqref{commutatorreln},
$$
x_{\alpha}(d)x_{\alpha_j}(c)n_j^{-1}B
=x_{\alpha}(d)x_{-\alpha_j}(c^{-1})B
=x_{-\alpha_j}(c^{-1})B=x_{\alpha_j}(c)n_j^{-1}B.
$$
Thus $\{x_{\alpha_j}(c)n_j^{-1}B\mid c\in\CC\}$ is $B$-invariant
and so $Bs_jB=\{x_{\alpha_j}(c)n_j^{-1}B\mid c\in\CC\}$.

\emph{Induction step:} If $w=s_{i_1}\cdots s_{i_\ell}$ is reduced
and if $\ell(ws_j)>\ell(w)$ then, by induction, 
\begin{align*}
Bws_jB
&\subseteq BwB\cdot Bs_jB
=\{ x_{\beta_1}(c_1)\cdots x_{\beta_{\ell}}(c_{\ell})x_{w\alpha_j}(c)n_wn_j^{-1}B
\mid c_1,\ldots, c_{\ell}, c \in \FF \},
\end{align*}
so that 
$
Bws_jB=\{x_{\beta_1}(c_1)\cdots x_{\beta_{\ell+1}}(c_{\ell+1})n_{ws_j}B
\mid c_1,\ldots,c_{\ell+1}\in\CC\}$ with $\beta_{\ell+1}=w\alpha_j$.

\smallskip\noindent
Step 2: Prove that \quad $BwB=BvB$ if and only if $w=v$\quad 
by induction on $\ell(w)$.

\smallskip
\emph{Base case:}
Suppose that $\ell(w)=0$.  Then $BwB=BvB$ implies that $v\in B$ so that there is a 
representative $n_v$ of $v$ such that $n_v\in B\cap N$.
Then $vR_{\mathrm{re}}^+\subseteq R_{\mathrm{re}}^+$ since
$n_v\cX_{\alpha}n_v^{-1}=\cX_{v\alpha}\in B$ for $\alpha\in R_{\mathrm{re}}^+$.  So
$\ell(v)=0$.  Thus, by \eqref{Wpres}, $v=1$.

\smallskip
\emph{Induction step:} Assume $BwB=BvB$ and $s_j$ is such that $\ell(ws_j)<\ell(w)$. 
Since $BvB\cdot Bs_jB \subseteq BvB\cup Bvs_jB$ (see \cite[Lemma 25]{St}, 
$$
Bws_jB\subseteq BwB\cdot Bs_jB=BvB\cdot Bs_jB\subseteq BvB\cup
Bvs_jB=BwB\cup Bvs_jB.$$
Thus, by induction, $ws_j=w$ or
$ws_j=vs_j$. Since $ws_j\ne w$, it follows that $w=v$.

\smallskip\noindent
Step 3:  Let us show that if  
$x_{\alpha_{i_1}}(c_1)n_{i_1}^{-1}\cdots 
x_{\alpha_{i_{\ell}}}(c_{\ell})n_{i_{\ell}}^{-1}B
=x_{\alpha_{i_1}}(c_1')n_{i_1}^{-1}\cdots x_{\alpha_{i_{\ell}}}(c_{\ell}')n_{i_{\ell}}^{-1}B,
$
then $c_i = c_i'$ for $i=1,2,\ldots, \ell$.  The left hand side of
$$
x_{\alpha_2}(c_2)n_{i_2}^{-1}\cdots x_{i_\ell}(c_\ell)n_{i_{\ell}}^{-1}B
=n_{i_1}x_{i_1}(c_1'-c_1)n_{i_1}^{-1}\cdots x_{i_\ell}(c_{\ell}')n_{i_\ell}^{-1}B
$$
is in $Bs_{i_2}\cdots s_{i_\ell}B$. If
$c_1'\neq c_1$ then $n_{i_1}^{-1}x_{i_1}(c_1'-c_1)n_{i_1}\in Bs_{i_1}B$
and the right hand side is contained in
$$
n_{i_1}^{-1}x_{i_1}(c_1'-c_1)n_{i_1}Bs_{i_2}\cdots s_{i_\ell}B\subseteq 
Bs_{i_1}B\cdot Bs_{i_2}\cdots s_{i_\ell}B
=Bs_{i_1}\cdots s_{i_\ell}B.
$$
By Step 2 this is impossible and so
$c_1'=c_1$.  Then, by induction, $c_i'=c_i$ for $i=1,2,\ldots, \ell$.

\smallskip\noindent
Step 4:  From the definition of $R(w)$ it follows that if $\alpha,\beta\in R(w)$ and 
$\alpha+\beta\in R_{\mathrm{re}}$ then $\alpha+\beta\in R(w)$
and if $\alpha,\beta\in R(w)$ then $\alpha,\beta$ form a prenilpotent pair. 
Thus, by [St, Lemma 17], any total order on the set $R(w)$ can be taken in the statement of 
the theorem. 
\end{proof}

\begin{remark}  Suppose that $\lambda\in \fh^*$ is dominant integral and 
$M(\lambda)$ is an (integrable) highest weight 
representation of $G$ generated by a highest weight vector $v_\lambda^+$. Then the set 
$BwBv_\lambda^+$ contains the vector $wv_\lambda^+$ and is
contained in the sum $\bigoplus_{\nu\ge w\lambda} M(\lambda)_\nu$ of the weight spaces
with weights $\ge w\lambda$.  This is another way to show that if $w\ne v$ then $BwB\ne BvB$
and accomplish Step~2 in the proof of Theorem \ref{thm:BwB}. 
\end{remark}

\section{Loop Lie algebras and their extensions}

This section gives a presentation of the theory of loop Lie algebras.  The main
lines of the theory are exactly as in the classical case (see, for example,
\cite[\S 4]{Mac3} and \cite[ch.\,7]{Kac}) but, following recent trends
(see \cite{Ga2}. \cite{GK}, \cite{GR} and \cite{Rou}) we treat the more general setting of the
loop Lie algebra of a Kac-Moody Lie algebra.

Let $\fg_0$ be a symmetrizable Kac-Moody Lie algebra with 
bracket $[,]_0\colon \fg_0\otimes \fg_0 \to \fg_0$ and invariant
form $\langle, \rangle_0 \colon \fg_0\times \fg_0 \to \CC$.
The \emph{loop Lie algebra} is 
$$\fg_0[t,t^{-1}] = \CC[t,t^{-1}]\otimes_\CC \fg_0
\qquad\hbox{with bracket}\qquad
[t^m x, t^n y]_0 = t^{m+n}[x,y]_0,$$
for $x,y\in \fg_0$.
Let
\begin{align*}
\fg = \fg_0[t,t^{-1}] \oplus \CC c \oplus \CC d, \qquad
\fg' = \fg_0[t, t^{-1}] \oplus \CC c, \qquad
\bar \fg' = \fg_0[t,t^{-1}] = \frac{\fg'}{\CC c}
\end{align*}
where the bracket on $\fg$ is given by 
\begin{equation}\label{loopliedefn}
[t^mx, t^ny] = t^{m+n}[x,y]_0 + \delta_{m+n,0} m \langle x,y\rangle_0 c,
\qquad
c\in Z(\fg),\qquad [d,t^mx] = mt^m x.
\end{equation}
By \cite[Ex.\ 7.8]{Kac}, $\fg'$ is the universal central extension of $\bar\fg'$.
An invariant symmetric form on $\fg$ is given by 
\begin{equation}
\langle c,d\rangle = 1,
\qquad 
\langle c, t^my\rangle = \langle d, t^m y\rangle = 0,
\qquad
\langle c, c\rangle = \langle d, d\rangle = 0,
\end{equation}
and
\begin{equation}
\langle t^mx, t^ny\rangle
= \begin{cases}
\langle x, y\rangle_0, &\hbox{if $m+n=0$,} \\
0, &\hbox{otherwise,}
\end{cases}
\end{equation}
for $x,y\in \fg_0$, $m,n\in \ZZ$.

Fix a Cartan subalgebra $\fh_0$ of $\fg_0$ and let
\begin{equation}
\fh = \fh_0 \oplus \CC c \oplus \CC d,
\qquad
\fh' = \fh_0 \oplus \CC c,
\qquad
\bar \fh' = \fh_0.
\end{equation}
As in \eqref{fhdef}, let $h_1,\ldots, h_n, d_1,\ldots, d_\ell$ be a basis of $\fh_0$ and let
\begin{equation}
\begin{array}{l}
\hbox{$\{h_1,\ldots, h_n, d_1,\ldots, d_\ell, c,d\}$ be a basis of $\fh$ and} \\
\hbox{$\{\omega_1, \ldots, \omega_n, \delta_1,\ldots, \delta_\ell, \Lambda_0, \delta\}$ 
the dual basis in 
$\fh^*$}
\end{array}
\end{equation}
so that
\begin{equation}\label{deltadefn}
\delta(\fh_0)=0,\ \ \delta(c)=0,\ \ \delta(d)=1,\qquad
\hbox{and}\qquad
\Lambda_0(\fh_0)=0,\ \ \Lambda_0(c)=1,\ \ \Lambda_0(d)=0.
\end{equation}
Let $R$ be as in \eqref{rootdef}.
As an $\fh$-module
\begin{equation}
\fg = 
\left( \bigoplus_{\alpha\in R\atop k\in \ZZ} \fg_{\alpha+k\delta}\right)
\oplus
\left( \bigoplus_{k\in \ZZ_{\ne 0}} \fg_{k\delta}\right)
\oplus\fh,
\quad\hbox{where}\quad
\fh = \fh_0 \oplus \CC c \oplus \CC d,
\end{equation}
\begin{equation}
\fg_{\alpha+k\delta} = t^k \fg_\alpha,
\qquad
\fg_{k\delta} = t^k \fh_0,
\qquad\hbox{and}\qquad
\tilde R = (R + \ZZ\delta) \cup \ZZ_{\ne 0}\delta
\end{equation}
is the set of \textit{roots} of $\fg$.

Let $\alpha\in R_{\mathrm{re}}$ with $\alpha = w\alpha_i$ and fix a choice of
$e_\alpha, f_\alpha$ and $h_\alpha$ in \eqref{alphasl2} (choose $\tilde w$).  Then
\begin{align}\label{eq:loopgens}
e_{-\alpha+k\delta} = t^kf_\alpha,
\qquad
f_{-\alpha+k\delta} =  t^{-k} e_\alpha, \qquad
h_{-\alpha+k\delta} 
= -h_\alpha + k  \langle e_\alpha, f_\alpha\rangle_0 c,
\end{align}
span a subalgebra isomorphic to $\fsl_2$.
If $\fg_0 = \fn_0^-\oplus \fh_0 \oplus \fn_0^+$ is the decomposition in 
\eqref{KMtridecomp} and
$$\begin{array}{l}
\hbox{$\fn^+$ is the subalgebra generated by $\fn_0^+$ and
$e_{-\alpha+k\delta}$ for $\alpha\in R_{\mathrm{re}}$, 
$k\in \ZZ_{>0}$, and} \\
\hbox{$\fn^-$ is the subalgebra generated by $\fn_0^-$ and 
$f_{-\alpha+k\delta}$ for $\alpha\in R_{\mathrm{re}}$, 
$k\in \ZZ_{>0}$,} \\
\end{array}
$$
then
$$\fg = \fn^- \oplus \fh \oplus \fn^+
\quad\hbox{with}\quad
\fn^+ = \fn_0^+\oplus 
\left(\bigoplus_{\alpha\in R\cup\{0\}\atop k\in \ZZ_{>0}} \fg_{\alpha+k\delta}\right)
\quad\hbox{and}\quad
\fn^- = \fn_0^-\oplus 
\left(\bigoplus_{\alpha\in R\cup\{0\}\atop k\in \ZZ_{<0}} \fg_{\alpha+k\delta}\right).
$$

The elements $e_{-\alpha+k\delta}$ 
and $f_{-\alpha+k\delta}$ in \eqref{eq:loopgens}
act locally nilpotently on $\fg$ because $f_\alpha$ and 
$e_\alpha$ act locally nilpotently on $\fg_0$.
Thus
\begin{equation}
\tilde s_{-\alpha+k\delta}
= \exp(\ad\,{t^kf_\alpha})\exp(-\ad\,{t^{-k}e_\alpha})\exp(\ad\,{t^kf_\alpha})
\end{equation}
is a well defined automorphism of $\fg$ and 
\begin{equation}
\tilde s_{-\alpha+k\delta} \fg_\beta = \fg_{s_{-\alpha+k\delta}\beta}
\qquad\hbox{and}\qquad
\tilde s_{-\alpha+k\delta}h = s_{-\alpha+k\delta} h,
\end{equation}
for $h\in \fh$ and $\beta\in \tilde R$, where\quad
$s_{-\alpha+k\delta}\colon \fh^*\to \fh^*$\quad and\quad $s_{-\alpha+k\delta}\colon \fh\to \fh$
\quad
are given by
\begin{equation}\label{affrefl}
s_{-\alpha+k\delta}\lambda = \lambda - \lambda(h_{-\alpha+k\delta})(-\alpha+k\delta)
\quad\hbox{and}\quad
s_{-\alpha+k\delta} h = h - (-\alpha+k\delta)(h)h_{-\alpha+k\delta},
\end{equation}
for $\lambda\in \fh^*$ and $h\in \fh$.
The \emph{Weyl group} of $\fg$ is the subgroup of $GL(\fh^*)$ (or $GL(\fh)$)
generated by the reflections $s_{-\alpha+k\delta}$,
\begin{equation}
W_{\mathrm{aff}} = \langle s_{-\alpha+k\delta}\ |\ \alpha\in R_{\mathrm{re}}, k\in \ZZ\rangle.
\end{equation}

Noting that \quad
$\fh^* = \fh_0^*\oplus\CC \Lambda_0 \oplus \CC\delta$
\quad and \quad
$\fh = \fh_0 \oplus \CC c \oplus \CC d$,
use \eqref{affrefl} to compute
\begin{equation*}
\begin{array}{ll}
s_{-\alpha+k\delta}(\bar\lambda) = \bar\lambda+\bar\lambda(h_\alpha)(-\alpha+k\delta), \qquad
&s_{-\alpha+k\delta}(\bar h) = \bar h + \alpha(\bar h)
(-h_\alpha+k\langle e_\alpha,f_\alpha\rangle_0 c), 
\\
s_{-\alpha+k\delta}(\ell\Lambda_0) = \ell\Lambda_0 - k\ell \langle e_\alpha, f_\alpha\rangle_0
(-\alpha+k\delta),
&s_{-\alpha+k\delta}(m c) = m c, \\
s_{-\alpha+k\delta}(m\delta) = m\delta,
&s_{-\alpha+k\delta}(\ell d) = \ell d - k\ell (-h_\alpha+k\langle e_\alpha,f_\alpha\rangle_0 c).
\end{array}
\end{equation*}
for $\bar\lambda\in \fh_0^*$, $\bar h\in \fh_0$, $m,\ell\in \CC$. 
For $\alpha\in R_\mathrm{re}$ and $k\in \ZZ$ 
\begin{equation}\label{roottransl}
\hbox{define $t_{k\alpha^\vee}\in W_{\mathrm{aff}}$ by}\qquad
s_{-\alpha+k\delta} = t_{k\alpha^\vee}s_{-\alpha},
\end{equation}
and use \eqref{halphavee} and \eqref{alphavee} 
to compute
\begin{equation*}
\begin{array}{ll}
t_{k\alpha^\vee}(\bar\lambda) = \bar\lambda-\bar\lambda(kh_\alpha)\delta, \qquad
&t_{k\alpha^\vee}(\bar h) = \bar h - k\alpha^\vee (\bar h) c, 
\\
t_{k\alpha^\vee}(\ell\Lambda_0) 
= \ell\Lambda_0 +\ell k\alpha^\vee- \ell
\hbox{$\frac12$}\langle kh_\alpha, kh_\alpha\rangle_0\delta,
&t_{k\alpha^\vee}(m c) = m c, \\
t_{k\alpha^\vee}(m\delta) = m \delta,
&t_{k\alpha^\vee}(\ell d) = \ell d +\ell kh_\alpha 
- \ell \hbox{$\frac12$} \langle kh_\alpha, kh_\alpha\rangle_0 c.
\end{array}
\end{equation*}
Then
$t_{k\alpha^\vee}t_{j \beta^\vee}(\bar\lambda)
=t_{kh_\alpha}(\bar\lambda - \bar\lambda(jh_\beta)\delta)
=\bar\lambda -\bar\lambda(kh_\alpha+j h_\beta)\delta$, and
\begin{align*}
t_{k\alpha^\vee}t_{j\beta^\vee}(\ell\Lambda_0)
&=t_{k\alpha^\vee}\left( \ell\Lambda_0 +\ell j\beta^\vee - \ell\hbox{$\frac12$}
\langle jh_\beta, jh_\beta\rangle_0\delta \right) \\
&= \ell\Lambda_0 + \ell k\alpha^\vee - \ell \hbox{$\frac12$}
\langle kh_\alpha, k h_\alpha\rangle_0\delta
+\ell j \beta^\vee - \ell j \beta^\vee(kh_\alpha)\delta 
- \ell \hbox{$\frac12$}\langle jh_\beta, jh_\beta\rangle_0 \delta \\
&= \ell \Lambda_0 +\ell(k\alpha^\vee + j\beta^\vee)
-\ell \hbox{$\frac12$}\langle kh_\alpha+jh_\beta, kh_\alpha+jh_\beta\rangle_0 \delta.
\end{align*}
This computation shows that 
$t_{k\alpha^\vee}t_{j\beta^\vee}=t_{j\alpha^\vee+k\beta^\vee}$.
Thus, if $W_0$ is the Weyl group of $\fg_0$ and 
$Q^* = \ZZ\hbox{-span}\{\alpha_1^\vee,\ldots, \alpha_n^\vee\}$
then
\begin{equation}\label{Wsdprod}
W_{\mathrm{aff}} = \{ t_{\lambda^\vee}w\ |\ \lambda^\vee\in Q^*, w\in W_0\}
\qquad\hbox{with}\qquad
t_{\lambda^\vee}t_{\mu^\vee}= t_{\lambda^\vee+\mu^\vee}
\quad\hbox{and}\quad
wt_{\lambda^\vee} = t_{w\lambda^\vee} w,
\end{equation}
for $w\in W_0$, $\lambda^\vee,\mu^\vee\in Q^*$.

Since $\CC \delta$ is $W_{\mathrm{aff}}$-invariant, the group
$W_{\mathrm{aff}}$ acts on $\fh^*/\CC\delta$ and $W_{\mathrm{aff}}$ acts on the
set
\begin{equation}\label{levelone}
\begin{matrix}
(\fh_0^*+\Lambda_0+\CC\delta)/\CC\delta &\mapright{\sim} &\fh_0^* \\
\bar\lambda+\Lambda_0 + \CC\delta &\longmapsto &\bar \lambda
\end{matrix}
\end{equation}
and the $W_{\mathrm{aff}}$-action on the right hand side is given by
\begin{equation}\label{nonlinearW}
s_\alpha(\bar\lambda) = \bar\lambda - \bar\lambda(h_\alpha)\alpha
\qquad\hbox{and}\qquad
t_{k\alpha^\vee}(\bar\lambda) = \bar\lambda+ k\alpha^\vee,\qquad
\hbox{for $\bar\lambda\in \fh_0$.}
\end{equation}
Here $\fh_0^*$ is a set with a $W_{\mathrm{aff}}$-action, the action of $W_{\mathrm{aff}}$ is
\emph{not linear}.

\section{Loop groups and the affine flag variety $G/I$}

This section gives a short treatment of loop groups following \cite[Ch.\,8]{St} and
\cite[\S 2.5 and 2.6]{Mac1}.  This theory is currently a subject of intense research as evidenced
by the work in \cite{Ga2}, \cite{GK}, \cite{Rem}, \cite{Rou}, \cite{GR}.

Let $\fg_0$ be a symmetrizable Kac-Moody Lie algebra and let $\fh_\ZZ$ be a
$\ZZ$-lattice in $\fh_0$ that contains $Q^\vee = \ZZ\hbox{-span}\{h_1,\ldots, h_n\}$.
\begin{equation}
\hbox{The \emph{loop group} is the Tits group $G=G_0(\CC((t)))$}
\end{equation}
over the field $\FF = \CC((t))$.
Let $K=G_0(\CC[[t]])$ and $G_0(\CC)$ be the Tits groups of $\fg_0$ and $\fh_\ZZ$ 
over the rings $\CC[[t]]$ and $\CC$, respectively, and let $B(\CC)$ be the 
standard \emph{Borel subgroup} of $G_0(\CC)$ as defined in \eqref{Borel}. Let
\begin{equation}
\hbox{$U^-$ be the subgroup of $G$ generated by $x_{-\alpha}(f)$ for $\alpha\in R_{\mathrm{re}}^+$ and $f\in \CC((t))$,}
\end{equation}
and define the standard
\emph{Iwahori subgroup} $I$ of $G$ by 
\begin{equation}\label{Idefn}
\begin{matrix}
G &= &G_0(\CC((t))) \\
$\beginpicture
\setcoordinatesystem units <0.8cm,0.8cm>         
\setplotarea x from -0.1 to 0.3, y from -0.5 to 0.5  
\put{$\cup$} at 0 0 \put{$\shortmid$} at 0.21 0.08
\put{$\shortmid$} at 0.21 0.0
\endpicture$ &&$\beginpicture
\setcoordinatesystem units <0.8cm,0.8cm>         
\setplotarea x from -0.1 to 0.3, y from -0.5 to 0.5  
\put{$\cup$} at 0 0 \put{$\shortmid$} at 0.21 0.08
\put{$\shortmid$} at 0.21 0.0
\endpicture$ \\
K &=& G_0(\CC[[t]]) &\mapright{\mathrm{ev}_{t=0}} &G_0(\CC) \\
$\beginpicture
\setcoordinatesystem units <0.8cm,0.8cm>         
\setplotarea x from -0.1 to 0.3, y from -0.5 to 0.5  
\put{$\cup$} at 0 0 \put{$\shortmid$} at 0.21 0.08
\put{$\shortmid$} at 0.21 0.0
\endpicture$ &&$\beginpicture
\setcoordinatesystem units <0.8cm,0.8cm>         
\setplotarea x from -0.1 to 0.3, y from -0.5 to 0.5  
\put{$\cup$} at 0 0 \put{$\shortmid$} at 0.21 0.08
\put{$\shortmid$} at 0.21 0.0
\endpicture$ &&$\beginpicture
\setcoordinatesystem units <0.8cm,0.8cm>         
\setplotarea x from -0.1 to 0.3, y from -0.5 to 0.5  
\put{$\cup$} at 0 0 \put{$\shortmid$} at 0.21 0.08
\put{$\shortmid$} at 0.21 0.0
\endpicture$ \\
I &= &\mathrm{ev}_{t=0}^{-1}(B(\CC)) &\mapright{\mathrm{ev}_{t=0}} &B(\CC).
\end{matrix}
\end{equation}
The \emph{affine flag variety} is $G/I$.

For $\alpha+j\delta\in R_{\mathrm{re}}+\ZZ\delta$ and $c\in \CC$, define
\begin{equation}\label{affxalpha}
x_{\alpha+j\delta}(c)=x_{\alpha}(ct^j)
\qquad\textrm{and}\qquad 
t_{\la^{\vee}}=h_{\la^{\vee}}(t^{-1}),
\end{equation}
and, for $c\in \CC^\times$, define
\begin{equation}\label{affn}
n_{\alpha+j\delta}(c) = x_{\alpha+j\delta}(c)
x_{-\alpha-j\delta}(-c^{-1})x_{\alpha+j\delta}(c),
\end{equation}
\begin{equation}\label{affh}
n_{\alpha+j\delta} = n_{\alpha+j\delta}(1),
\quad\hbox{and}\quad
h_{(\alpha+j\delta)^\vee}(c) = n_{\alpha+j\delta}(c)n_{\alpha+j\delta}^{-1}
\end{equation}
analogous to \eqref{nalphadefn}.  

The group
\begin{equation}
\widetilde{W} = \{ t_{\lambda^\vee} w\ |\ \lambda^\vee\in \fh_\ZZ, w\in W_0\}
\qquad\hbox{with}\quad
t_{\lambda^\vee} t_{\mu^\vee} = t_{\lambda^\vee+\mu^\vee}
\quad\hbox{and}\quad wt_{\lambda^\vee} = t_{w\lambda^\vee} w,
\end{equation}
acts on $\fh_0^*\oplus \CC\delta$ by 
\begin{align}\label{eq:action}
v(\mu+k\delta)=v\mu+k\delta
\qquad\textrm{and}\qquad 
t_{\la^{\vee}}(\mu+k\delta)=\mu+(k-\langle\la^{\vee},\mu\rangle)\delta
\end{align}
for $v\in W_0$, $\la^{\vee}\in \fh_\ZZ$, $\mu\in\fh_\ZZ^*$, and $k\in\ZZ$. 
Then
$
n_{\alpha+j\delta}(c)=t_{-j\alpha^{\vee}}n_{\alpha}(c)=n_\alpha(ct^j),
$
$$
n_{\alpha}x_{\beta+k\delta}(c)n_{\alpha}^{-1}
=n_{\alpha}x_{\beta}(ct^k)n_{\alpha}^{-1}
=x_{s_{\alpha}\beta}(\epsilon_{\alpha,\beta} ct^k)
=x_{s_{\alpha}(\beta+k\delta)}(\epsilon_{\alpha,\beta} c)
$$
for $\alpha\in R_{\mathrm{re}}$,
and, for $\lambda^\vee\in\fh_\ZZ$,
$$
t_{\la^{\vee}}x_{\beta+k\delta}(c)t_{\la^{\vee}}^{-1}=x_{\beta+k\delta}(t^{-\langle\la^{\vee},\beta\rangle}c)=x_{t_{\la^{\vee}}(\beta+k\delta)}(c).
$$
Thus the \emph{root subgroups}
\begin{equation}
\cX_{\alpha+j\delta} = \{ x_{\alpha+j\delta}(c)\ |\ c\in \CC\}
\qquad\hbox{satisfy}\qquad
w\cX_{\alpha+j\beta}w^{-1} = \cX_{w(\alpha+j\delta)}
\end{equation}
for $w\in \widetilde{W}$ and $\alpha+j\delta\in R_{\mathrm{re}}+\ZZ\delta$.
These relations are a reflection of the symmetry of the group $G$ under the group
defined in \eqref{Ndefn}:
\begin{equation}
\widetilde{N}= N(\CC((t)))
\quad\hbox{generated by $n_\alpha(g)$, $h_{\lambda^\vee}(g)$, for $g\in \CC((t))^\times$,}
\end{equation}
$\alpha\in R_{\mathrm{re}}$, and $\lambda^\vee\in \fh_\ZZ$.
The homomorphism $\widetilde{N}\to W_0$ from \eqref{NtoW} lifts to a surjective homomorphism
(see \cite[p.\,26 and p.\,28]{Mac1})
$$\begin{matrix}
\widetilde{N} &\longrightarrow &\widetilde{W} \\
n_{\alpha+j\delta} &\longmapsto &t_{-j\alpha^\vee}s_\alpha \\
t_{\lambda^\vee} &\longmapsto &t_\lambda^\vee
\end{matrix}
\qquad\hbox{with kernel $H$ generated by $h_\lambda(d)$, $d\in \CC[[t]]^\times$.}$$

Define
\begin{equation}
\tilde{R}_{\mathrm{re}}^I
=(R_{\mathrm{re}}^++\ZZ_{\geq0}\delta)\sqcup(-R_{\mathrm{re}}^++\ZZ_{>0}\delta)
\quad\hbox{and}\quad
\tilde{R}_{\mathrm{re}}^U
=-R_{\mathrm{re}}^++\ZZ\delta
\end{equation}
so that
\begin{equation}
\begin{array}{lcl}
\cX_{\alpha+j\delta}\subseteq I 
&\quad\hbox{if and only if}\quad
&\alpha+j\delta\in\tilde{R}_{\mathrm{re}}^I
\qquad\hbox{and} \\
\\
\cX_{\alpha+j\delta}\subseteq U^-
&\quad\hbox{if and only if}\quad
&\alpha+j\delta\in\tilde{R}_{\mathrm{re}}^U.
\end{array}
\end{equation}
Note that 
$\tilde{R}_{\mathrm{re}}^I\sqcup(-\tilde{R}_{\mathrm{re}}^I)
=\tilde{R}_{\mathrm{re}}^U\sqcup(-\tilde{R}_{\mathrm{re}}^U)
=R_{\mathrm{re}}+\ZZ\delta$.

\section{The folding algorithm and the intersections $U^-vI\cap IwI$}

In this section we prove our main theorem, which gives a precise connection
between the alcove walks in \cite{Ra} and the points in the affine flag variety.
The algorithm here is essentially that which is found in \cite{BD} and, with our setup from
the earlier sections, it is the `obvious one'.  The same method has, of course,
been used in other contexts, see, for example, \cite{C}.

A special situation in the loop group theory is when $\fg_0$ is finite dimensional.  
In this case, the extended loop Lie algebra $\fg$ defined in \eqref{loopliedefn} 
is also a Kac-Moody Lie algebra.
If $G_0$ is the Tits group of $\fg_0$ and $G=G_0(\CC((t)))$ is the corresponding
loop group then the subgroup $I$ defined in \eqref{Idefn} differs from the Borel subgroup of 
the Kac-Moody group $G_{KM}$ for $\fg$ only by elements of $T$, and 
the affine flag variety of $G$ coincides with the flag variety of $G_{KM}$.  
Thus, in this case, Theorem \ref{thm:BwB} provides a labeling of the points of the affine
flag variety. 

Suppose that $\fg_0$ is a finite dimensional complex semisimple Lie algebra
presented as a Kac-Moody Lie algebra with generators
$e_1,\ldots, e_n, f_1,\ldots, f_n, h_1,\ldots, h_n$ and Cartan matrix
$A = \left( \alpha_i(h_j)\right)_{1\le i,j,\le n}$.
Let $\varphi$ be the highest
root of $R$ (the highest weight of the adjoint representation), fix
$$e_\varphi\in \fg_{\varphi},
\quad
f_{\varphi}\in \fg_{-\varphi}
\quad\hbox{such that}\quad
\langle e_\varphi, f_{\varphi}\rangle_0 = 1,$$
and let
$$e_0 = e_{-\varphi+\delta} = tf_{\varphi},
\quad
f_0 = f_{-\varphi+\delta} = t^{-1}e_\varphi,
\qquad
h_0 = [e_0,f_0] = [tx_{-\varphi}, t^{-1}x_\varphi] 
=-h_\varphi+c,
$$
as in \eqref{eq:loopgens}.  The magical fact is that,
in this case, $\fg = \fg_0[t,t^{-1}]\oplus \CC c \oplus \CC d$ is a Kac-Moody Lie 
algebra with generators
$e_0,\ldots, e_n$, $f_0,\ldots, f_n$, $h_0,\ldots, h_n, d$ and Cartan matrix
\begin{equation}
A^{(1)} = \left(\alpha_i(h_j)\right)_{0\le i,j\le n},
\qquad\hbox{where\quad 
$\alpha_0 = -\varphi+\delta$
\quad and\quad 
$h_0= -h_\varphi+c$,}
\end{equation}
where $\delta$ is as in \eqref{deltadefn} (see \cite[Thm.\,7.4]{Kac}).

The \emph{alcoves} are the open connected components of
$$\fh_{\RR}\backslash
\bigcup_{-\alpha+j\delta\in\tilde{R}_\mathrm{re}^I} H_{-\alpha+j\delta},
\qquad\hbox{where}\quad
H_{-\alpha+j\delta} = \{ x^\vee\in \fh_\RR\ |\ \langle x^\vee, \alpha\rangle = j\}.$$
Under the map in \eqref{levelone} the chambers $wC$ of the Tits cone $X$ (see \eqref{domchamber} and \eqref{Titscone}) become the alcoves.
Each alcove is a fundamental region for the action of 
$W_{\mathrm{aff}}$ on $\fh_{\RR}$ given by \eqref{nonlinearW} and
$W_{\mathrm{aff}}$ acts simply transitively on the set of alcoves (see \cite[Prop.\,6.6]{Kac}). 
Identify $1\in W_{\mathrm{aff}}$ with the \textit{fundamental alcove}
$$
A_0=\{x^{\vee}\in \fh_{\RR}\mid \langle
x^{\vee},\alpha_i\rangle>0\textrm{ for all $0\leq i\leq n$}\}$$
to make a bijection
$$W_{\mathrm{aff}} \longleftrightarrow \{\hbox{alcoves}\}.
$$
For example, when $\fg_0= \fsl_3$,
\begin{equation}\label{sl3pict1}
\beginpicture
\setcoordinatesystem units <1.1cm,1.1cm>         
\setplotarea x from -5 to 5, y from -5 to 5.2  
    \plot 2.8 -3.81 3.7 -2.2516 /
    \plot 1.8 -3.81 3.7 -0.52 /
    \plot 0.8 -3.81 3.7 1.212 /
    \plot -0.2 -3.81 3.7 3.044 /
    \plot -1.2 -3.81 3.2 3.81 /
    \plot -2.2 -3.81 2.2 3.81 /
    \plot -3.2 -3.81 1.2 3.81 /
    \plot -3.7 -3.044 0.2 3.81 /
    \plot -3.7 -1.212 -0.8 3.81 /
    \plot -3.7 0.520 -1.8 3.81 /
    \plot -3.7 2.2516 -2.8 3.81 /
    \plot -2.8 -3.81 -3.7 -2.2516 /
    \plot -1.8 -3.81 -3.7 -0.52 /
    \plot -0.8 -3.81 -3.7 1.212 /
    \plot 0.2 -3.81 -3.7 3.044 /
    \plot 1.2 -3.81 -3.2 3.81 /
    \plot 2.2 -3.81 -2.2 3.81 /
    \plot 3.2 -3.81 -1.2 3.81 /
    \plot 3.7 -3.044 -0.2 3.81 /
    \plot 3.7 -1.212 0.8 3.81 /
    \plot 3.7 0.520 1.8 3.81 /
    \plot 3.7 2.2516 2.8 3.81 /
    \plot -3.7 -3.464 3.7 -3.464 /
    \plot -3.7 -2.598 3.7 -2.598 /
    \plot -3.7 -1.732 3.7 -1.732 /
    \plot -3.7 -0.866 3.7 -0.866 /
    \plot -3.7 0 3.7 0 /
    \plot -3.7 3.464 3.7 3.464 /
    \plot -3.7 2.598 3.7 2.598 /
    \plot -3.7 1.732 3.7 1.732 /
    \plot -3.7 0.866 3.7 0.866 /
    \put{$\bullet$} at 0 3.464
    \put{$\bullet$} at 0 -3.464
    \put{$\bullet$} at 3 3.464
    \put{$\bullet$} at 3 -3.464
    \put{$\bullet$} at -3 3.464
    \put{$\bullet$} at -3 -3.464
    \small{\put{$1$} at 0 0.5
    \put{$s_0$} at 0 1.1
    \put{$s_1$} at 0.5 0.3
    \put{$s_2$} at -0.5 0.3
    \put{$s_0s_1$} at 0.5 1.55
    \put{$s_0s_2$} at -0.5 1.55
    \put{$s_1s_2$} at 0.5 -0.2
    \put{$s_2s_1$} at -0.5 -0.2
    \put{$s_2s_0$} at -1 0.7
    \put{$s_1s_0$} at 1 0.7
    \put{$w_0$} at 0 -0.65
    \put{$w_0s_0$} at 0 -1.1 }%
    \put{\beginpicture
    \setplotsymbol({\tiny{$\bullet$}})
    \plot 1 0 0.5 0.866 /
    \plot 0.5 0.866 -0.5 0.866 /
    \plot -0.5 0.866 -1 0 /
    \plot -1 0 -0.5 -0.866 /
    \plot -0.5 -0.866 0.5 -0.866 /
    \plot 0.5 -0.866 1 0 /
    \put{$\bullet$} at 0 0
    \endpicture} at 0 0
    \put{\beginpicture
    \setplotsymbol({\tiny{$\bullet$}})
    \plot 1 0 0.5 0.866 /
    \plot 0.5 0.866 -0.5 0.866 /
    \plot -0.5 0.866 -1 0 /
    \plot -1 0 -0.5 -0.866 /
    \plot -0.5 -0.866 0.5 -0.866 /
    \plot 0.5 -0.866 1 0 /
    \put{$\bullet$} at 0 0
    \endpicture} at 0 1.732
    \put{\beginpicture
    \setplotsymbol({\tiny{$\bullet$}})
    \plot 1 0 0.5 0.866 /
    \plot 0.5 0.866 -0.5 0.866 /
    \plot -0.5 0.866 -1 0 /
    \plot -1 0 -0.5 -0.866 /
    \plot -0.5 -0.866 0.5 -0.866 /
    \plot 0.5 -0.866 1 0 /
    \put{$\bullet$} at 0 0
    \endpicture} at 0 -1.732
    \put{\beginpicture
    \setplotsymbol({\tiny{$\bullet$}})
    \plot 1 0 0.5 0.866 /
    \plot 0.5 0.866 -0.5 0.866 /
    \plot -0.5 0.866 -1 0 /
    \plot -1 0 -0.5 -0.866 /
    \plot -0.5 -0.866 0.5 -0.866 /
    \plot 0.5 -0.866 1 0 /
    \put{$\bullet$} at 0 0
    \endpicture} at -1.5 0.866
    \put{\beginpicture
    \setplotsymbol({\tiny{$\bullet$}})
    \plot 1 0 0.5 0.866 /
    \plot 0.5 0.866 -0.5 0.866 /
    \plot -0.5 0.866 -1 0 /
    \plot -1 0 -0.5 -0.866 /
    \plot -0.5 -0.866 0.5 -0.866 /
    \plot 0.5 -0.866 1 0 /
    \put{$\bullet$} at 0 0
    \endpicture} at -1.5 -0.866
    \put{\beginpicture
    \setplotsymbol({\tiny{$\bullet$}})
    \plot 1 0 0.5 0.866 /
    \plot 0.5 0.866 -0.5 0.866 /
    \plot -0.5 0.866 -1 0 /
    \plot -1 0 -0.5 -0.866 /
    \plot -0.5 -0.866 0.5 -0.866 /
    \plot 0.5 -0.866 1 0 /
    \put{$\bullet$} at 0 0
    \endpicture} at -1.5 2.598
    \put{\beginpicture
    \setplotsymbol({\tiny{$\bullet$}})
    \plot 1 0 0.5 0.866 /
    \plot 0.5 0.866 -0.5 0.866 /
    \plot -0.5 0.866 -1 0 /
    \plot -1 0 -0.5 -0.866 /
    \plot -0.5 -0.866 0.5 -0.866 /
    \plot 0.5 -0.866 1 0 /
    \put{$\bullet$} at 0 0
    \endpicture} at -1.5 -2.598
    \put{\beginpicture
    \setplotsymbol({\tiny{$\bullet$}})
    \plot 1 0 0.5 0.866 /
    \plot 0.5 0.866 -0.5 0.866 /
    \plot -0.5 0.866 -1 0 /
    \plot -1 0 -0.5 -0.866 /
    \plot -0.5 -0.866 0.5 -0.866 /
    \plot 0.5 -0.866 1 0 /
    \put{$\bullet$} at 0 0
    \endpicture} at 1.5 0.866
    \put{\beginpicture
    \setplotsymbol({\tiny{$\bullet$}})
    \plot 1 0 0.5 0.866 /
    \plot 0.5 0.866 -0.5 0.866 /
    \plot -0.5 0.866 -1 0 /
    \plot -1 0 -0.5 -0.866 /
    \plot -0.5 -0.866 0.5 -0.866 /
    \plot 0.5 -0.866 1 0 /
    \put{$\bullet$} at 0 0
    \endpicture} at 1.5 -0.866
    \put{\beginpicture
    \setplotsymbol({\tiny{$\bullet$}})
    \plot 1 0 0.5 0.866 /
    \plot 0.5 0.866 -0.5 0.866 /
    \plot -0.5 0.866 -1 0 /
    \plot -1 0 -0.5 -0.866 /
    \plot -0.5 -0.866 0.5 -0.866 /
    \plot 0.5 -0.866 1 0 /
    \put{$\bullet$} at 0 0
    \endpicture} at 1.5 2.598
    \put{\beginpicture
    \setplotsymbol({\tiny{$\bullet$}})
    \plot 1 0 0.5 0.866 /
    \plot 0.5 0.866 -0.5 0.866 /
    \plot -0.5 0.866 -1 0 /
    \plot -1 0 -0.5 -0.866 /
    \plot -0.5 -0.866 0.5 -0.866 /
    \plot 0.5 -0.866 1 0 /
    \put{$\bullet$} at 0 0
    \endpicture} at 1.5 -2.598
    \put{\beginpicture
    \setplotsymbol({\tiny{$\bullet$}})
    \plot 0.5 -0.866 -0.5 -0.866 /
    \plot -0.5 -0.866 -1 0 /
    \plot -1 0 -0.5 0.866 /
    \plot -0.5 0.866 0.5 0.866 /
    \put{$\bullet$} at 0 0
    \endpicture} at 3 -1.732
    \put{\beginpicture
    \setplotsymbol({\tiny{$\bullet$}})
    \plot 0.5 -0.866 -0.5 -0.866 /
    \plot -0.5 -0.866 -1 0 /
    \plot -1 0 -0.5 0.866 /
    \plot -0.5 0.866 0.5 0.866 /
    \put{$\bullet$} at 0 0
    \endpicture} at 3 0
    \put{\beginpicture
    \setplotsymbol({\tiny{$\bullet$}})
    \plot 0.5 -0.866 -0.5 -0.866 /
    \plot -0.5 0.866 0.5 0.866 /
    \put{$\bullet$} at 0 0
    \endpicture} at 3 1.732
    \put{\beginpicture
    \setplotsymbol({\tiny{$\bullet$}})
    \plot 0.5 -0.866 -0.5 -0.866 /
    \plot -0.5 0.866 0.5 0.866 /
    \put{$\bullet$} at 0 0
    \endpicture} at -3 1.732
    \put{\beginpicture
    \setplotsymbol({\tiny{$\bullet$}})
    \plot 0.5 -0.866 -0.5 -0.866 /
    \plot -0.5 0.866 0.5 0.866 /
    \put{$\bullet$} at 0 0
    \endpicture} at -3 0
    \put{\beginpicture
    \setplotsymbol({\tiny{$\bullet$}})
    \plot 0.5 -0.866 -0.5 -0.866 /
    \plot -0.5 0.866 0.5 0.866 /
    \put{$\bullet$} at 0 0
    \endpicture} at -3 -1.732
    \plot 2.5 -4.33 3.7 -2.2516 /
    \plot 1.5 -4.33 3.7 -0.52 /
    \plot 0.5 -4.33 3.7 1.212 /
    \plot -0.5 -4.33 3.7 3.044 /
    \plot -1.5 -4.33 3.2 3.81 /
    \plot -2.5 -4.33 2.2 3.81 /
    \plot -3.5 -4.33 1.2 3.81 /
    \plot -3.7 -3.044 0.2 3.81 /
    \plot -3.7 -1.212 -0.8 3.81 /
    \plot -3.7 0.520 -1.8 3.81 /
    \plot -3.7 2.2516 -2.8 3.81 /
    \plot 2.5 4.33 3.7 2.2516 /
    \plot 1.5 4.33 3.7 0.52 /
    \plot 0.5 4.33 3.7 -1.212 /
    \plot -0.5 4.33 3.7 -3.044 /
    \plot -1.5 4.33 3.2 -3.81 /
    \plot -2.5 4.33 2.2 -3.81 /
    \plot -3.5 4.33 1.2 -3.81 /
    \plot -3.7 3.044 0.2 -3.81 /
    \plot -3.7 1.212 -0.8 -3.81 /
    \plot -3.7 -0.520 -1.8 -3.81 /
    \plot -3.7 -2.2516 -2.8 -3.81 /
    \plot -3.7 -3.464 4.5 -3.464 /
    \plot -3.7 -2.598 4.5 -2.598 /
    \plot -3.7 -1.732 4.5 -1.732 /
    \plot -3.7 -0.866 4.5 -0.866 /
    \plot -3.7 0 4.5 0 /
    \plot -3.7 3.464 4.5 3.464 /
    \plot -3.7 2.598 4.5 2.598 /
    \plot -3.7 1.732 4.5 1.732 /
    \plot -3.7 0.866 4.5 0.866 /
    {\small\put{$H_{-\alpha_1+\delta}$} at -3.5 -4.7
    \put{$H_{\alpha_1}$} at -2.5 -4.7
    \put{$H_{\alpha_1+2\delta}$} at -0.5 -4.7
    \put{$H_{\alpha_1+4\delta}$} at 1.5 -4.7 } %
    {\small\put{$H_{\alpha_2+\delta}$} at -3.5 4.7
    \put{$H_{\alpha_2}$} at -2.5 4.7
    \put{$H_{-\alpha_2+2\delta}$} at -0.5 4.7
    \put{$H_{-\alpha_2+4\delta}$} at 1.5 4.7 } %
    {\small\put{$H_{-\varphi+4\delta}$}[l] at 4.5 3.464
    \put{$H_{-\varphi+3\delta}$}[l] at 4.5 2.598
    \put{$H_{-\varphi+2\delta}$}[l] at 4.5 1.732
    \put{$H_{\alpha_0}$}[l] at 4.5 0.866
    \put{$H_{\varphi}$}[l] at 4.5 0
    \put{$H_{\varphi+\delta}$}[l] at 4.5 -0.866
    \put{$H_{\varphi+2\delta}$}[l] at 4.5 -1.732
    \put{$H_{\varphi+3\delta}$}[l] at 4.5 -2.598
    \put{$H_{\varphi+4\delta}$}[l] at 4.5 -3.464 }%
    \put{$+$} at 4.25 0.2
    \put{$+$} at 4.25 1.066
    \put{$+$} at 4.25 1.932
    \put{$+$} at 4.25 2.798
    \put{$+$} at 4.25 3.664
    \put{$-$} at 4.25 -1.066
    \put{$-$} at 4.25 -1.932
    \put{$-$} at 4.25 -2.798
    \put{$-$} at 4.25 -3.664
    \put{$+$} at 4.25 -3.264
    \put{$+$} at 4.25 -2.398
    \put{$+$} at 4.25 -1.532
    \put{$+$} at 4.25 -0.666
    \put{$-$} at 4.25 3.264
    \put{$-$} at 4.25 2.398
    \put{$-$} at 4.25 1.532
    \put{$-$} at 4.25 0.666
    \put{$-$} at 4.25 -0.2
    \put{$+$} at -0.9 -4.2
    \put{$-$} at -0.4 -4.2
    \put{$+$} at -1.9 -4.2
    \put{$-$} at -1.4 -4.2
    \put{$+$} at -2.9 -4.2
    \put{$-$} at -2.4 -4.2
    \put{$+$} at -3.9 -4.2
    \put{$-$} at -3.4 -4.2
    \put{$+$} at 0.1 -4.2
    \put{$-$} at 0.6 -4.2
    \put{$+$} at 1.1 -4.2
    \put{$-$} at 1.6 -4.2
    \put{$+$} at 2.1 -4.2
    \put{$-$} at 2.6 -4.2
    \put{$-$} at -0.9 4.2
    \put{$+$} at -0.4 4.2
    \put{$-$} at -1.9 4.2
    \put{$+$} at -1.4 4.2
    \put{$-$} at -2.9 4.2
    \put{$+$} at -2.4 4.2
    \put{$-$} at -3.9 4.2
    \put{$+$} at -3.4 4.2
    \put{$-$} at 0.1 4.2
    \put{$+$} at 0.6 4.2
    \put{$-$} at 1.1 4.2
    \put{$+$} at 1.6 4.2
    \put{$-$} at 2.1 4.2
    \put{$+$} at 2.6 4.2
\endpicture
\end{equation}
The alcoves are the triangles and the (centres
of) hexagons are the elements of $Q^{\vee}$.

Let $w\in W_{\mathrm{aff}}$.  Following the discussion in 
\eqref{walkseq}-\eqref{rootseq}, a reduced expression 
$\vec w=s_{i_1}\cdots s_{i_{\ell}}$ is
a \emph{walk} starting at $1$ and ending at $w$,  
$$\beginpicture
\setcoordinatesystem units <0.9cm,0.9cm>         
\setplotarea x from -5 to 10, y from -2 to 4  
    \plot 8.3 -1.212 9.2 0.346 /
    \plot 7.3 -1.212 9.2 2.078 /
    \plot 6.3 -1.212 9.2 3.81 /
    \plot 5.3 -1.212 8.2 3.81 /
    \plot 4.3 -1.212 7.2 3.81 /
    \plot 3.3 -1.212 6.2 3.81 /
    \plot 2.3 -1.212 5.2 3.81 /
    \plot 1.3 -1.212 4.2 3.81 /
    \plot 0.3 -1.212 3.2 3.81 /
    \plot -0.7 -1.212 2.2 3.81 /
    \plot -1.7 -1.212 1.2 3.81 /
    \plot -2.7 -1.212 0.2 3.81 /
    \plot -3.7 -1.212 -0.8 3.81 /
    \plot -3.7 0.520 -1.8 3.81 /
    \plot -3.7 2.2516 -2.8 3.81 /
    \plot -3.3 -1.212 -3.7 -0.52 /
    \plot -2.3 -1.212 -3.7 1.212 /
    \plot -1.3 -1.212 -3.7 3.044 /
    \plot -0.3 -1.212 -3.2 3.81 /
    \plot 0.7 -1.212 -2.2 3.81 /
    \plot 1.7 -1.212 -1.2 3.81 /
    \plot 2.7 -1.212 -0.2 3.81 /
    \plot 3.7 -1.212 0.8 3.81 /
    \plot 4.7 -1.212 1.8 3.81 /
    \plot 5.7 -1.212 2.8 3.81 /
    \plot 6.7 -1.212 3.8 3.81 /
    \plot 7.7 -1.212 4.8 3.81 /
    \plot 8.7 -1.212 5.8 3.81 /
    \plot 9.2 -0.346 6.8 3.81 /
    \plot 9.2 1.386 7.8 3.81 /
    \plot 9.2 3.118 8.8 3.81 /
    \plot -3.7 -0.866 9.2 -0.866 /
    \plot -3.7 0 9.2 0 /
    \plot -3.7 3.464 9.2 3.464 /
    \plot -3.7 2.598 9.2 2.598 /
    \plot -3.7 1.732 9.2 1.732 /
    \plot -3.7 0.866 9.2 0.866 /
    \arrow <5pt> [.2,.67] from -2 2.309 to -1.5 2.021   %
    \arrow <5pt> [.2,.67] from -1.5 2.021 to -1.5 1.443   %
    \arrow <5pt> [.2,.67] from -1.5 1.443 to -1 1.155   %
    \arrow <5pt> [.2,.67] from -1 1.155 to -1 0.577   %
    \arrow <5pt> [.2,.67] from -1 0.577 to -0.5 0.289   %
    \arrow <5pt> [.2,.67] from -0.5 0.289 to 0 0.577   %
    \arrow <5pt> [.2,.67] from 0 0.577 to 0.5 0.289   %
    \arrow <5pt> [.2,.67] from 0.5 0.289 to 1 0.577   %
    \arrow <5pt> [.2,.67] from 1 0.577 to 1.5 0.289   %
    \arrow <5pt> [.2,.67] from 1.5 0.289 to 2 0.577   %
    \arrow <5pt> [.2,.67] from 2 0.577 to 2.5 0.289   %
    \arrow <5pt> [.2,.67] from 2.5 0.289 to 3 0.577   %
    \arrow <5pt> [.2,.67] from 3 0.577 to 3.5 0.289   %
    \arrow <5pt> [.2,.67] from 3.5 0.289 to 4 0.577   %
    \arrow <5pt> [.2,.67] from 4 0.577 to 4.5 0.289   %
    \arrow <5pt> [.2,.67] from 4.5 0.289 to 5 0.577   %
    \arrow <5pt> [.2,.67] from 5 0.577 to 5.5 0.289   %
    \arrow <5pt> [.2,.67] from 5.5 0.289 to 6 0.577   %
    \arrow <5pt> [.2,.67] from 6 0.577 to 6.5 0.289   %
    \arrow <5pt> [.2,.67] from 6.5 0.289 to 6.95 0.6   %
    \put{$\bullet$} at -2 1.732 
    \put{$1$} at -2.1 2.3
    \put{$w$} at 7.15 0.65
    \put{$H_{\beta_1}$} at -3.8 -1.5
   \put{$H_{\beta_2}$} at -4.1 1.732
    \put{$H_{\beta_3}$} at -2.8 -1.5
   \put{$H_{\beta_4}$} at -4.1 0.866
    \put{$H_{\beta_5}$} at -1.8 -1.5
\endpicture$$
and the points of 
\begin{align}\label{eq:points}
IwI = \{ x_{i_1}(c_1)n_{i_1}^{-1}x_{i_2}(c_2)n_{i_2}^{-1}\cdots
x_{i_\ell}(c_\ell)n_{i_\ell}^{-1}I\ |\ c_1,\ldots, c_\ell\in \CC\}
\end{align}
are in bijection with labelings of the 
edges of the walk by complex numbers $c_1,\ldots, c_\ell$.
The elements of $R(w)=\{\beta_1,\ldots, \beta_\ell\}$ are 
the elements of $\tilde R_{\mathrm{re}}^I$ corresponding to the sequence of hyperplanes 
crossed by the walk.

The labeling of the hyperplanes in \eqref{sl3pict1} is such that neighboring alcoves have
\begin{equation}\label{hyplabels}
\beginpicture
\setcoordinatesystem units <0.8cm,0.8cm>         
\setplotarea x from -0.8 to 0.7, y from -0.5 to 0.5  
\put{$H_{v\alpha_j}$}[b] at 0 0.6
\put{$\scriptstyle{v}$}[br] at -0.6 0.1
\put{$\scriptstyle{vs_j}$}[bl] at 0.6 0.1
\plot  0 -0.4  0 0.5 /
\arrow <5pt> [.2,.67] from -0.5 0 to 0.5 0   %
\endpicture
\qquad\hbox{with $v\alpha_j\in \tilde R_{\mathrm{re}}^I$ if $v$ is closer to $1$ than $vs_j$.}
\end{equation}
The \emph{periodic orientation} (illustrated in (\ref{sl3pict1})) is the orientation of the hyperplanes $H_{\alpha+k\delta}$
such that 
\begin{enumerate}
\item[(a)] $1$ is on the positive side of $H_\alpha$ for $\alpha\in R_{\mathrm{re}}^+$,
\item[(b)] $H_{\alpha+k\delta}$ and $H_\alpha$ have parallel orientiations.
\end{enumerate}
This orientation is such that 
\begin{equation}\label{orttn}
v\alpha_j\in \tilde R_{\mathrm{re}}^U
\quad\hbox{if and only if}\quad
\beginpicture
\setcoordinatesystem units <0.8cm,0.8cm>         
\setplotarea x from -0.8 to 0.7, y from -0.5 to 0.5  
\put{$H_{v\alpha_j}$}[b] at 0 0.6
\put{$\scriptstyle{-}$}[b] at -0.4 0.25
\put{$\scriptstyle{+}$}[b] at 0.4 0.25
\put{$\scriptstyle{v}$}[br] at -0.6 0.1
\put{$\scriptstyle{vs_j}$}[bl] at 0.6 0.1
\plot  0 -0.4  0 0.5 /
\arrow <5pt> [.2,.67] from -0.5 0 to 0.5 0   %
\endpicture
.
\end{equation}

Together, \eqref{hyplabels} and \eqref{orttn} provide a powerful combinatorics for 
analyzing the intersections $U^-vI\cap IwI$.
We shall use the first identity in \eqref{nalphadefn}, in the form
\begin{equation}\label{foldinglaw}
x_\alpha(c)n_\alpha^{-1} = x_{-\alpha}(c^{-1})x_{\alpha}(-c)h_{\alpha^\vee}(c)
\qquad\hbox{(main folding law)},
\end{equation}
to rewrite the points of $IwI$ given in (\ref{eq:points}) as elements of $U^-vI$.
Suppose that
\begin{equation}\label{startstate}
x_{i_1}(c_1)n_{i_1}^{-1}\cdots x_{i_\ell}(c_\ell)n_{i_\ell}^{-1}
= x_{\gamma_1}(c_1')\cdots x_{\gamma_\ell}(c_\ell')n_vb,
\qquad\hbox{where $b\in I$},
\end{equation}
$v\in W_{\mathrm{aff}}$ and $n_v = n_{j_1}^{-1}\cdots n_{j_k}^{-1}$ 
if $v=s_{i_1}\cdots s_{i_k}$
is a reduced word,
and $\gamma_1,\ldots, \gamma_\ell\in \tilde R_{\mathrm{re}}^U$ so that 
$x_{\gamma_1}(c_1')\cdots x_{\gamma_\ell}(c_\ell')\in U^-$. Then the procedure described in \eqref{type1step}-\eqref{type3step} will compute
$c_{\ell+1}'\in \CC$, $b'\in I$, $v'\in W_{\mathrm{aff}}$ and 
$\gamma_{\ell+1}\in \tilde R_{\mathrm{re}}^U$ so that
$$x_{i_1}(c_1)n_{i_1}^{-1}\cdots x_{i_\ell}(c_\ell)n_{i_\ell}^{-1}x_j(c)n_j^{-1}
= x_{\gamma_1}(c_1')\cdots x_{\gamma_\ell}(c_\ell')
x_{\gamma_{\ell+1}}(c_{\ell+1})n_{v'}b'.
$$

Keep the notations in \eqref{startstate}.
Since $bx_j(c)n_j^{-1}\in Is_jI$  there are unique $\tilde c\in \CC$ and $b'\in I$ such that 
$bx_j(c)n_j^{-1}=x_j(\tilde c)n_j^{-1}b'$ and
\begin{align*}
x_{i_1}(c_1)n_{i_1}^{-1}\cdots x_{i_\ell}(c_\ell)n_{i_\ell}^{-1}x_j(c)n_j^{-1}
&= x_{\gamma_1}(c_1')\cdots x_{\gamma_\ell}(c_\ell')n_vbx_j(c)n_j^{-1} \\
&= x_{\gamma_1}(c_1')\cdots x_{\gamma_\ell}(c_\ell')n_vx_j(\tilde c)n_j^{-1}b'.
\end{align*}
\emph{Case 1}: If $v\alpha_j\in \tilde R_{\mathrm{re}}^U$, \quad
$\beginpicture
\setcoordinatesystem units <0.8cm,0.8cm>         
\setplotarea x from -0.8 to 0.7, y from -0.5 to 0.5  
\put{$H_{v\alpha_j}$}[b] at 0 0.6
\put{$\scriptstyle{-}$}[b] at -0.4 0.25
\put{$\scriptstyle{+}$}[b] at 0.4 0.25
\put{$\scriptstyle{\tilde c}$}[tl] at 0.1 -0.1
\put{$\scriptstyle{v}$}[br] at -0.6 0.1
\put{$\scriptstyle{vs_j}$}[bl] at 0.6 0.1
\plot  0 -0.4  0 0.5 /
\arrow <5pt> [.2,.67] from -0.5 0 to 0.5 0   %
\endpicture
$,
\qquad then $x_{\gamma_1}(c_1')\cdots x_{\gamma_\ell}(c_\ell')n_vx_j(\tilde c)n_j^{-1}b'$ is equal to 
$$
x_{\gamma_1}(c_1')\cdots x_{\gamma_\ell}(c_\ell')x_{v\alpha_j}(\pm\tilde c)n_{vs_j}b'
\in U^-vs_jI\cap Iws_jI.
$$
In this case, $\gamma_{\ell+1}=v\alpha_j$, $v'=vs_j$, and
\begin{equation}\label{type1step}
\beginpicture
\setcoordinatesystem units <0.8cm,0.8cm>         
\setplotarea x from -0.8 to 0.7, y from -0.5 to 0.5  
\put{$H_{v\alpha_j}$}[b] at 0 0.6
\put{$\scriptstyle{-}$}[b] at -0.4 0.25
\put{$\scriptstyle{+}$}[b] at 0.4 0.25
\put{$\scriptstyle{\tilde c}$}[tl] at 0.1 -0.1
\put{$\scriptstyle{v}$}[br] at -0.6 0.1
\put{$\scriptstyle{vs_j}$}[bl] at 0.6 0.1
\plot  0 -0.4  0 0.5 /
\arrow <5pt> [.2,.67] from -0.5 0 to 0.5 0   %
\endpicture
\qquad\hbox{becomes}\qquad
\beginpicture
\setcoordinatesystem units <0.8cm,0.8cm>         
\setplotarea x from -0.8 to 0.7, y from -0.5 to 0.5  
\put{$H_{v\alpha_j}$}[b] at 0 0.6
\put{$\scriptstyle{-}$}[b] at -0.4 0.25
\put{$\scriptstyle{+}$}[b] at 0.4 0.25
\put{$\scriptstyle{\pm\tilde c}$}[tl] at 0.1 -0.1
\put{$\scriptstyle{v}$}[br] at -0.6 0.1
\put{$\scriptstyle{vs_j}$}[bl] at 0.6 0.1
\plot  0 -0.4  0 0.5 /
\arrow <5pt> [.2,.67] from -0.5 0 to 0.5 0   %
\endpicture
.
\end{equation}
\emph{Case 2:} If $v\alpha_j\not\in \tilde{R}_{\mathrm{re}}^U$ and $\tilde c\ne 0$, \quad
$\beginpicture
\setcoordinatesystem units <0.8cm,0.8cm>         
\setplotarea x from -0.8 to 0.7, y from -0.5 to 0.5  
\put{$H_{v\alpha_j}$}[b] at 0 0.6
\put{$\scriptstyle{-}$}[b] at -0.4 0.25
\put{$\scriptstyle{+}$}[b] at 0.4 0.25
\put{$\scriptstyle{\tilde c}$}[tl] at 0.1 -0.1
\put{$\scriptstyle{v}$}[bl] at 0.6 0.1
\put{$\scriptstyle{vs_j}$}[br] at -0.6 0.1
\plot  0 -0.4  0 0.5 /
\arrow <5pt> [.2,.67] from 0.5 0 to -0.5 0   %
\endpicture
$,
\qquad then
\begin{align*}
x_{\gamma_1}(c_1')\cdots x_{\gamma_\ell}(c_\ell')n_vx_{\alpha_j}(\tilde c)n_j^{-1}b'
&=
x_{\gamma_1}(c_1')\cdots x_{\gamma_\ell}(c_\ell')n_v
x_{-\alpha_j}(\tilde c^{-1})x_{\alpha_j}(-\tilde c)h_{\alpha_j^\vee}(\tilde c)b' \\
&=
x_{\gamma_1}(c_1')\cdots x_{\gamma_\ell}(c_\ell')n_vx_{-\alpha_j}(\tilde c^{-1})b'' \\
&=
x_{\gamma_1}(c_1')\cdots x_{\gamma_\ell}(c_\ell')x_{\gamma_{\ell+1}}(\pm\tilde c^{-1})n_vb''
\in U^-vI\cap Iws_jI,
\end{align*}
where $\gamma_{\ell+1} = -v\alpha_j$ and $b'' = x_{\alpha_j}(-\tilde{c})h_{\alpha_j^{\vee}}(\tilde{c})b'$.  So
\begin{equation}\label{type2step}
\beginpicture
\setcoordinatesystem units <0.8cm,0.8cm>         
\setplotarea x from -0.8 to 0.7, y from -0.5 to 0.5  
\put{$H_{v\alpha_j}$}[b] at 0 0.6
\put{$\scriptstyle{-}$}[b] at -0.4 0.25
\put{$\scriptstyle{+}$}[b] at 0.4 0.25
\put{$\scriptstyle{\tilde{c}}$}[tl] at 0.1 -0.1
\put{$\scriptstyle{v}$}[bl] at 0.6 0.1
\put{$\scriptstyle{vs_j}$}[br] at -0.6 0.1
\plot  0 -0.4  0 0.5 /
\arrow <5pt> [.2,.67] from 0.5 0 to -0.5 0   %
\endpicture
\qquad\hbox{becomes}\qquad
\beginpicture
\setcoordinatesystem units <0.8cm,0.8cm>         
\setplotarea x from -0.8 to 0.7, y from -0.5 to 0.5  
\put{$H_{v\alpha_j}$}[b] at 0 0.7
\put{$\scriptstyle{-}$}[b] at -0.4 0.35
\put{$\scriptstyle{+}$}[b] at 0.4 0.35
\put{$\scriptstyle{\pm\tilde{c}^{-1}}$}[tl] at 0.1 -0.1
\put{$\scriptstyle{v}$}[bl] at 0.6 0.1
\plot  0 -0.4  0 0.6 /
\plot 0.5 0  0.05 0 /
\arrow <5pt> [.2,.67] from 0.05 0.1 to 0.5 0.1   %
\plot 0.05 0 0.05 0.1 /
\endpicture
\end{equation}

\smallskip\noindent
\emph{Case 3:}  If  $v\alpha_j\not\in \tilde R_{\mathrm{re}}^U$ and $\tilde c=0$, \quad
$\beginpicture
\setcoordinatesystem units <0.8cm,0.8cm>         
\setplotarea x from -0.8 to 0.7, y from -0.5 to 0.5  
\put{$H_{v\alpha_j}$}[b] at 0 0.6
\put{$\scriptstyle{-}$}[b] at -0.4 0.25
\put{$\scriptstyle{+}$}[b] at 0.4 0.25
\put{$\scriptstyle{0}$}[tl] at 0.1 -0.1
\put{$\scriptstyle{v}$}[bl] at 0.6 0.1
\put{$\scriptstyle{vs_j}$}[br] at -0.6 0.1
\plot  0 -0.4  0 0.5 /
\arrow <5pt> [.2,.67] from 0.5 0 to -0.5 0   %
\endpicture
$,
\qquad then
\begin{align*}
x_{\gamma_1}(c_1')\cdots x_{\gamma_\ell}(c_\ell')n_vx_{\alpha_j}(0)n_j^{-1}b'
&=x_{\gamma_1}(c_1')\cdots x_{\gamma_\ell}(c_\ell')n_vx_{-\alpha_j}(0)n_j^{-1}b' \\
&=x_{\gamma_1}(c_1')\cdots x_{\gamma_\ell}(c_\ell')x_{\gamma_{\ell+1}}(0)n_{vs_j}b'
\in U^-vs_jI\cap Iws_jI,
\end{align*}
where $\gamma_{\ell+1} = -v\alpha_j$.  So
\begin{equation}\label{type3step}
\beginpicture
\setcoordinatesystem units <0.8cm,0.8cm>         
\setplotarea x from -0.8 to 0.7, y from -0.5 to 0.5  
\put{$H_{v\alpha_j}$}[b] at 0 0.6
\put{$\scriptstyle{-}$}[b] at -0.4 0.25
\put{$\scriptstyle{+}$}[b] at 0.4 0.25
\put{$\scriptstyle{0}$}[tl] at 0.1 -0.1
\put{$\scriptstyle{v}$}[bl] at 0.6 0.1
\put{$\scriptstyle{vs_j}$}[br] at -0.6 0.1
\plot  0 -0.4  0 0.5 /
\arrow <5pt> [.2,.67] from 0.5 0 to -0.5 0   %
\endpicture
\qquad\hbox{becomes}\qquad
\beginpicture
\setcoordinatesystem units <0.8cm,0.8cm>         
\setplotarea x from -0.8 to 0.7, y from -0.5 to 0.5  
\put{$H_{v\alpha_j}$}[b] at 0 0.6
\put{$\scriptstyle{-}$}[b] at -0.4 0.25
\put{$\scriptstyle{+}$}[b] at 0.4 0.25
\put{$\scriptstyle{0}$}[tl] at 0.1 -0.1
\put{$\scriptstyle{v}$}[bl] at 0.6 0.1
\put{$\scriptstyle{vs_j}$}[br] at -0.6 0.1
\plot  0 -0.4  0 0.5 /
\arrow <5pt> [.2,.67] from 0.5 0 to -0.5 0   %
\endpicture
\end{equation}
We have proved the following theorem.

\begin{thm}\label{mainthm}
If $w\in W_{\mathrm{aff}}$ and
$\vec w=s_{i_1}\cdots s_{i_{\ell}}$ is a minimal length walk to $w$ define
$$\cP(\vec w)_{v}=\left\{\begin{matrix}\textrm{labeled folded paths $p$ of type $\vec w$}\\ 
\textrm{which end in $v$}\end{matrix}\right\}
\qquad\hbox{for $v\in W_{\mathrm{aff}}$,}
$$
where a \emph{labeled folded path of type $\vec w$} is a sequence of steps 
of the form 
$$
\beginpicture
\setcoordinatesystem units <0.8cm,0.8cm>         
\setplotarea x from -0.8 to 0.7, y from -0.5 to 0.5  
\put{$H_{v\alpha_j}$}[b] at 0 0.6
\put{$\scriptstyle{-}$}[b] at -0.4 0.25
\put{$\scriptstyle{+}$}[b] at 0.4 0.25
\put{$\scriptstyle{c}$}[tl] at 0.1 -0.1
\put{$\scriptstyle{v}$}[br] at -0.6 0.1
\put{$\scriptstyle{vs_j}$}[bl] at 0.6 0.1
\plot  0 -0.4  0 0.5 /
\arrow <5pt> [.2,.67] from -0.5 0 to 0.5 0   %
\endpicture
,
\qquad
\beginpicture
\setcoordinatesystem units <0.8cm,0.8cm>         
\setplotarea x from -0.8 to 0.7, y from -0.5 to 0.5  
\put{$H_{v\alpha_j}$}[b] at 0 0.7
\put{$\scriptstyle{-}$}[b] at -0.4 0.35
\put{$\scriptstyle{+}$}[b] at 0.4 0.35
\put{$\scriptstyle{c^{-1}}$}[tl] at 0.1 -0.1
\put{$\scriptstyle{v}$}[bl] at 0.6 0.1
\plot  0 -0.4  0 0.6 /
\plot 0.5 0  0.05 0 /
\arrow <5pt> [.2,.67] from 0.05 0.1 to 0.5 0.1   %
\plot 0.05 0 0.05 0.1 /
\endpicture
,
\qquad
\beginpicture
\setcoordinatesystem units <0.8cm,0.8cm>         
\setplotarea x from -0.8 to 0.7, y from -0.5 to 0.5  
\put{$H_{v\alpha_j}$}[b] at 0 0.6
\put{$\scriptstyle{-}$}[b] at -0.4 0.25
\put{$\scriptstyle{+}$}[b] at 0.4 0.25
\put{$\scriptstyle{0}$}[tl] at 0.1 -0.1
\put{$\scriptstyle{v}$}[bl] at 0.6 0.1
\put{$\scriptstyle{vs_j}$}[br] at -0.6 0.1
\plot  0 -0.4  0 0.5 /
\arrow <5pt> [.2,.67] from 0.5 0 to -0.5 0   %
\endpicture
,
\qquad
\hbox{where the $k$th step has $j=i_k$.}
$$
Viewing $U^-vI\cap IwI$ as a subset of $G/I$, 
there is a bijection
$$\cP(\vec w)_v\longleftrightarrow U^-vI\cap IwI.
$$
\end{thm}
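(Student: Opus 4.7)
The plan is to build the bijection in two stages and then invert it step by step. First I would establish the affine analogue of Theorem~\ref{thm:BwB}, namely
\[
IwI \;=\; \{\,x_{i_1}(c_1)n_{i_1}^{-1}x_{i_2}(c_2)n_{i_2}^{-1}\cdots x_{i_\ell}(c_\ell)n_{i_\ell}^{-1}I \;\mid\; c_1,\ldots,c_\ell\in\CC\,\}
\]
with the labels $c_k$ uniquely determined by the coset. The argument is the same as the proof of Theorem~\ref{thm:BwB} with $B$, $W$ replaced by $I$, $\widetilde{W}$: the base case $Is_jI = \{x_j(c)n_j^{-1}I \mid c\in \CC\}$ comes from the commutation relations \eqref{commutatorreln} applied to $I$, the induction step uses $w\cX_\alpha w^{-1} = \cX_{w\alpha}$, and uniqueness of labels follows from $IwI \cap IvI = \emptyset$ for $w\ne v$ in $\widetilde{W}$ (itself an induction on $\ell(w)$ using $IwI\cdot Is_jI \subseteq IwI \cup Iws_jI$). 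This identifies the right-hand side of \eqref{eq:points} with $IwI$ and parametrizes it by $\CC^\ell$.

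Next I would apply the folding algorithm \eqref{type1step}--\eqref{type3step} inductively. Suppose after $k$ steps the starting expression has been rewritten as $x_{\gamma_1}(c_1')\cdots x_{\gamma_k}(c_k')n_{v_k}b_k$ with $\gamma_i\in \tilde R_{\mathrm{re}}^U$, $v_k\in W_{\mathrm{aff}}$ and $b_k\in I$. Appending $x_{i_{k+1}}(c_{k+1})n_{i_{k+1}}^{-1}$, I would use the Iwahori decomposition $Is_{i_{k+1}}I = \{x_{i_{k+1}}(\tilde c)n_{i_{k+1}}^{-1}I\mid\tilde c\in\CC\}$ to write $b_kx_{i_{k+1}}(c_{k+1})n_{i_{k+1}}^{-1} = x_{i_{k+1}}(\tilde c)n_{i_{k+1}}^{-1}b_k'$, and then apply the trichotomy: Case~1 when $v_k\alpha_{i_{k+1}} \in \tilde R_{\mathrm{re}}^U$ (straight crossing with label $\pm\tilde c$), Case~2 when $v_k\alpha_{i_{k+1}} \notin \tilde R_{\mathrm{re}}^U$ and $\tilde c \ne 0$ (positive fold via the main folding law \eqref{foldinglaw}, with recorded label $\pm\tilde c^{-1}$), Case~3 when $v_k\alpha_{i_{k+1}} \notin \tilde R_{\mathrm{re}}^U$ and $\tilde c = 0$ (straight negative crossing with forced label $0$). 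This produces a map $\Phi:\CC^\ell \to \bigsqcup_v \cP(\vec w)_v$ compatible with the decomposition $U^-vI \cap IwI$.

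To upgrade $\Phi$ to the desired bijection I must argue that the cosets $x_{\gamma_1}(c_1')\cdots x_{\gamma_\ell}(c_\ell')n_v I\in G/I$ produced by the algorithm are determined by, and determine, the tuple $(c_1',\ldots,c_\ell')$. Existence is built into the algorithm. For injectivity at a fixed endpoint $v$, the uniqueness analogue of Step~3 in the proof of Theorem~\ref{thm:BwB} is what is needed: the $\gamma_i$ appearing along a path are indexed by the consecutive hyperplane crossings and lie in $\tilde R_{\mathrm{re}}^U$, so they can be ordered consistently and by \cite[Lemma~17]{St} and the prenilpotent commutation relations \eqref{commutatorreln} the ordered product $x_{\gamma_1}(c_1')\cdots x_{\gamma_\ell}(c_\ell')n_v$ is determined by the tuple of labels. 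Conversely, the original label $c_{k+1}$ can be recovered from the algorithm data at each step: in Case~1 via $c_{k+1} = \pm\tilde c$, in Case~2 via $c_{k+1} = \pm(\tilde c^{-1})^{-1}$ (the inversion $\tilde c \leftrightarrow \tilde c^{-1}$ on $\CC^\times$ being bijective), and in Case~3 via $c_{k+1} = 0$; the outputs of these three cases are disjoint as configurations of a labeled step, so no ambiguity arises.

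The main obstacle is precisely the uniqueness assertion in the last paragraph, because, in contrast to the finite-type Theorem~\ref{thm:BwB}, the roots $\gamma_1,\ldots,\gamma_\ell$ along a folded path need \emph{not} be distinct: a fold at $H_{v\alpha_{i_{k+1}}}$ creates a second crossing of the same hyperplane later in the path, and the same affine real root can be recorded twice. Handling this requires being careful about the ordering used in the $U^-$-expression: the $\gamma_i$ must be read in the order dictated by the walk, and one must check that the prenilpotent-pair relations \eqref{commutatorreln} between the $\cX_{\gamma_i}$ (all of which lie in $U^-$) do not collapse distinct label tuples to the same coset. I expect this to reduce, as in Step~3 of the proof of Theorem~\ref{thm:BwB}, to the statement that if two such products agree modulo $n_vI$ then so does the initial segment by passing through $n_{v_k}$, allowing induction on $\ell$; and to the fact, established via the action on an integrable highest-weight module, that different affine Weyl group elements $v$ give disjoint double cosets $U^-vI$, thereby keeping the endpoint $v$ well defined.
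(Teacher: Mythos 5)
Your proposal follows the paper's route: the paper obtains the parametrization \eqref{eq:points} of $IwI$ by observing that the affine flag variety of $G$ is the flag variety of the affine Kac--Moody group $G_{KM}$, so that Theorem~\ref{thm:BwB} applies verbatim, and then the ``proof'' of Theorem~\ref{mainthm} is exactly the folding algorithm \eqref{startstate}--\eqref{type3step} that you describe. Your first two paragraphs reproduce this faithfully.

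The one place where you go astray is the final paragraph. The ``main obstacle'' you identify --- that the roots $\gamma_1,\ldots,\gamma_\ell$ along a folded path can repeat, so that the $U^-$-side products $x_{\gamma_1}(c_1')\cdots x_{\gamma_\ell}(c_\ell')n_vI$ might not be uniquely parametrized --- is not an obstacle the proof needs to overcome, and trying to overcome it head-on (via \cite[Lemma~17]{St} and reorderings in $U^-$) would be genuinely delicate precisely because the roots repeat after a fold. Injectivity of $\cP(\vec w)_v\to G/I$ does not come from a normal form in $U^-$; it comes from the $IwI$ side. At each step the passage $c_{k+1}\mapsto\tilde c\mapsto(\hbox{step type and label})$ is invertible (the map $c\mapsto\tilde c$ determined by $b_k x_j(c)n_j^{-1}=x_j(\tilde c)n_j^{-1}b_k'$ is an invertible affine map of $\CC$, and the three step types are mutually exclusive configurations), so labeled folded paths of type $\vec w$ biject with tuples $(c_1,\ldots,c_\ell)\in\CC^\ell$; distinct tuples give distinct points of $G/I$ by Step~3 of Theorem~\ref{thm:BwB}; and the endpoint $v$ is well defined on cosets because the Iwasawa strata $U^-vI$ are disjoint (Theorem~\ref{thm:decompositions}). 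Composing these three facts gives the bijection with no need to analyze the $U^-$-expression at all --- which is why the paper can simply present the algorithm and declare the theorem proved. You in fact state the correct mechanism in your third paragraph (``the original label $c_{k+1}$ can be recovered from the algorithm data at each step''); you should promote that to the proof of injectivity and discard the program sketched in the last paragraph.
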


\section{An example}  

For the group $G = SL_3(\CC((t)))$,
$$x_{\alpha_1}(c) = \begin{pmatrix} 1 &c &0 \\ 0 &1 &0 \\ 0 &0 &1\end{pmatrix}
\quad
h_{\alpha_1^\vee}(c) = \begin{pmatrix} c &0 &0 \\ 0 &c^{-1} &0 \\ 0 &0 &1\end{pmatrix},
\quad
n_1 = \begin{pmatrix} 0 &1 &0 \\ -1 &0 &0 \\ 0 &0 &1\end{pmatrix},
$$
$$x_{\alpha_2}(c) = \begin{pmatrix} 1 &0 &0 \\ 0 &1 &c \\ 0 &0 &1\end{pmatrix}
\quad
h_{\alpha_2^\vee}(c) = \begin{pmatrix} 1 &0 &0 \\ 0 &c &0 \\ 0 &0 &c^{-1}\end{pmatrix},
\quad
n_2 = \begin{pmatrix} 1 &0 &0 \\ 0 &0 &1 \\ 0 &-1 &0\end{pmatrix},
$$
$$x_{\alpha_0}(c) = \begin{pmatrix} 1 &0 &0 \\ 0 &1 &0 \\ ct &0 &1\end{pmatrix}
\quad
h_{\alpha_0^\vee}(c) = \begin{pmatrix} c^{-1} &0 &0 \\ 0 &1 &0 \\ 0 &0 &c\end{pmatrix},
\quad
n_0 = \begin{pmatrix} 0 &0 &-t^{-1} \\ 0 &1 &0 \\ t &0 &0\end{pmatrix}.
$$

Let 
$w=s_2s_1s_0s_2s_0s_1s_0s_2s_0$ and $v=s_2s_1s_0s_2s_1s_2s_0$ so that
$$w = \begin{pmatrix} t^2 &0 &0 \\ 0 &0 &1 \\ 0 &t^{-2} &0 \end{pmatrix}
\quad\hbox{and}\quad
v = \begin{pmatrix} 0 &1 &0 \\ t^2 &0 &0 \\ 0 &0 &t^{-2}\end{pmatrix}.
$$
We shall use Theorem \ref{mainthm} to show that the points of $IwI\cap U^-vI$ are
$$x_2(c_1)n_2^{-1}x_1(c_2)n_1^{-1}x_0(c_3)n_0^{-1}
x_2(c_4)n_2^{-1}x_0(c_5)n_0^{-1}x_1(c_6)n_1^{-1}
x_0(c_7)n_0^{-1}x_2(c_8)n_2^{-1}x_0(c_9)n_0^{-1}I,
$$
with $c_1,\ldots, c_9\in \CC$ such that
\begin{equation}\label{oldlabels}
c_1=0,\ \  c_2=0,\ \  c_3=0,\ \ c_4=0,\ \  c_5\ne 0,\ \  c_6=0,\ \  c_7\ne 0,\ \  c_9=c_7^{-1}c_8.
\end{equation}
Precisely,
$$x_2(0)n_2^{-1}x_1(0)n_1^{-1}x_0(0)n_0^{-1}
x_2(0)n_2^{-1}x_0(c_5)n_0^{-1}x_1(0)n_1^{-1}
x_0(c_7)n_0^{-1}x_2(c_8)n_2^{-1}x_0(c_7^{-1}c_8)n_0^{-1}$$
is equal to $u_9v_9b_9$,
with $u_9\in U^-$, $v_9\in N$, $b_9\in I$ given by
\begin{equation}\label{eq:finalstats}
\begin{array}{c}
u_9=\begin{pmatrix}
1 &0 &0 \\ c_5^{-1}-c_5^{-2}c_7^{-1}c_8t &1 &0 \\
c_5^{-1}c_7^{-1}t^{-2} &0 &1\end{pmatrix}, 
\qquad
v_9=\begin{pmatrix}0&1&0\\
-t^2&0&0\\
0&0&t^{-2}\end{pmatrix} \\
b_9=\begin{pmatrix}
c_5^{-1}-c_5^{-2}c_7^{-1}c_8t&-c_5^{-2}c_7^{-1}c_8^2&c_5^{-2}c_7^{-2}c_8^2 \\
-t^2&c_5c_7+c_8t&-c_5-c_7^{-1}c_8t \\
-c_5^{-1}c_7^{-1}t^2&-c_5^{-1}c_7^{-1}c_8t&c_7^{-1}+c_5^{-1}c_7^{-2}c_8t\end{pmatrix},
\end{array}
\end{equation}
so that
$u_9=x_{-\alpha_2}(d_1)x_{-\varphi}(d_2)x_{-\alpha_2-\delta}(d_3)x_{-\varphi-\delta}(d_4)
x_{-\alpha_1}(d_5)x_{-\alpha_2-2\delta}(d_6)x_{-\varphi-3\delta}(d_7)
x_{-\alpha_1+\delta}(d_8)$ $\cdot x_{-\alpha_2-3\delta}(d_9)$ 
with
\begin{equation}\label{newlabels}
d_1=d_2=d_3=d_4=0,\ \ d_5=c_5^{-1},\ \ d_6=0,\ \ d_7=c_5^{-1}c_7^{-1},\ \ 
d_8=-c_5^{-2}c_7^{-1}c_8,\ \ d_9=0.
\end{equation}
Pictorially, the walk
with labels $c_1,\ldots, c_9$
$$\beginpicture
\setcoordinatesystem units <0.9cm,0.9cm>         
\setplotarea x from -2 to 2, y from -3.3 to 2.35  
    \plot 1.3 -2.944 1.7 -2.2516 /
    \plot 0.3 -2.944 1.7 -0.5196 /
    \plot -0.7 -2.944 1.7 1.2124 /
    \plot -1.7 -2.944 1.2 2.0784 /
    \plot -1.7 -1.2124 0.2 2.0784 /
    \plot -1.7 0.5196 -0.8 2.0784 /
    \plot -1.3 -2.944 -1.7 -2.2516 /
    \plot -0.3 -2.944 -1.7 -0.5196 /
    \plot 0.7 -2.944 -1.7 1.2124 /
    \plot 1.7 -2.944 -1.2 2.0784 /
    \plot 1.7 -1.2124 -0.2 2.0784 /
    \plot 1.7 0.5196 0.8 2.0784 /
    \plot -1.7 -2.598 1.7 -2.598 /
    \plot -1.7 -1.732 1.7 -1.732 /
    \plot -1.7 -0.866 1.7 -0.866 /
    \plot -1.7 0 1.7 0 /
    \plot -1.7 1.732 1.7 1.732 /
    \plot -1.7 0.866 1.7 0.866 /
    \arrow <5pt> [.2,.67] from -0.5 1.443 to 0 1.155   %
    \arrow <5pt> [.2,.67] from 0 1.155 to 0 0.577   %
    \arrow <5pt> [.2,.67] from 0 0.577 to 0.5 0.289   %
    \arrow <5pt> [.2,.67] from 0.5 0.289 to 0.5 -0.289   %
    \arrow <5pt> [.2,.67] from 0.5 -0.289 to 0 -0.577   %
    \arrow <5pt> [.2,.67] from 0 -0.577 to 0 -1.155   %
    \arrow <5pt> [.2,.67] from 0 -1.155 to 0.5 -1.443   %
    \arrow <5pt> [.2,.67] from 0.5 -1.443 to 0.5 -2.021   %
    \arrow <5pt> [.2,.67] from 0.5 -2.021 to 0 -2.309   %
    \put{$\bullet$} at -0.5 0.866
\endpicture
\qquad\hbox{becomes}\qquad
\beginpicture
\setcoordinatesystem units <0.9cm,0.9cm>         
\setplotarea x from -2.5 to 2, y from -1.2 to 2.1  
    \plot -2.2 1.3856 -1.8 2.0784 /
    \plot -2.2 -0.3464 -0.8 2.0784 /
    \plot -1.7 -1.2124 0.2 2.0784 /
    \plot -0.7 -1.2124 1.2 2.0784 /
    \plot 0.3 -1.2124 1.7 1.2124 /
    \plot 1.3 -1.2124 1.7 -0.5196 /
    \plot 0.8 2.0784 1.7 0.5196 /
    \plot -0.2 2.0784 1.7 -1.2124 /
    \plot -1.2 2.0784 0.7 -1.2124 /
    \plot -2.2 2.0784 -0.3 -1.2124 /
    \plot -2.2 0.3464 -1.3 -1.2124 /
    \plot -2.2 -0.866 1.7 -0.866 /
    \plot -2.2 0 1.7 0 /
    \plot -2.2 1.732 1.7 1.732 /
    \plot -2.2 0.866 1.7 0.866 /
    \arrow <5pt> [.2,.67] from -1.5 1.443 to -1 1.155   %
    \arrow <5pt> [.2,.67] from -1 1.155 to -1 0.577   %
    \arrow <5pt> [.2,.67] from -1 0.577 to -0.5 0.289   %
    \arrow <5pt> [.2,.67] from -0.5 0.289 to -0.5 -0.216   %
    \put{\beginpicture
    \setcoordinatesystem units <0.75cm,0.75cm>         
    \plot 0 0 -0.28 -0.15 /
    \plot -0.28 -0.15 -0.2 -0.3 /
    \arrow <5pt> [.2,.67] from -0.2 -0.3 to 0.05 -0.13
    \endpicture} at -0.5 -0.216
    \arrow <5pt> [.2,.67] from -0.45 -0.289 to -0.05 -0.566   %
    \put{\beginpicture
    \setcoordinatesystem units <0.7cm,0.7cm>         
    \arrow <5pt> [.2,.67] from 0.08 -0.32 to 0.08 0 %
    \plot 0.07 -0.32 -0.08 -0.32 /
    \plot -0.08 -0.32 -0.08 0 /
    \endpicture} at 0 -0.551
    \arrow <5pt> [.2,.67] from 0.05 -0.566 to 0.5 -0.289   %
    \arrow <5pt> [.2,.67] from 0.5 -0.289 to 1 -0.577   %
    \put{$\bullet$} at -1.5 0.866
\endpicture,$$
the labeled folded path with labels $d_1,\ldots,d_9$.

The step by step computation is as follows:

\smallskip\noindent
Step 1:  If $c_1=0$ then
$$x_2(c_1)n_2^{-1} = x_{-\alpha_2}(0)n_2^{-1}=u_1v_1b_1,\qquad\hbox{with}$$
$$u_1= x_{-\alpha_2}(0),\qquad v_1 = \begin{pmatrix}
1 &0 &0 \\ 0 &0 &-1 \\ 0 &1 &0 \end{pmatrix},
\qquad\hbox{and}\qquad
b_1=1.
$$
Step 2: If $c_2=0$ then, since $v_1x_1(c_2)v_1^{-1} = x_\varphi(c_2)$,
$$
u_1v_1b_1x_1(c_2)n_1^{-1} =u_1x_{\varphi}(c_2)v_1n_1^{-1}b_1= u_1x_{-\varphi}(0)v_1n_1^{-1}b_1 = u_2v_2b_2, \qquad\hbox{with}$$
$$u_2 = u_1x_{-\varphi}(0),\qquad 
v_2 = v_1n_1^{-1} = \begin{pmatrix} 0 &-1 &0 \\ 0 &0 &-1 \\ 1 &0 &0\end{pmatrix}
\qquad\hbox{and}\qquad
b_2=1.$$

\smallskip\noindent
Step 3:  If $c_3 = 0$ then, since $v_2x_0(c_3)v_2^{-1} = x_{\alpha_2+\delta}(-c_3)$,
$$u_2v_2b_2x_0(c_3)n_0^{-1} =u_2x_{\alpha_2+\delta}(-c_3)v_2n_0^{-1}b_2= u_2x_{-\alpha_2-\delta}(0)v_2n_0^{-1} b_2= u_3 v_3b_3, \qquad\hbox{with}
$$
$$u_3 = u_2x_{-\alpha_2-\delta}(0),\qquad
v_3 = v_2n_0^{-1} = \begin{pmatrix} 0 &-1 &0 \\ t &0 &0 \\ 0 &0 &t^{-1}\end{pmatrix},
\qquad\hbox{and}\qquad
b_3=1.
$$

\smallskip\noindent
Step 4: If $c_4=0$ then, since $v_3x_2(c_4)v_3^{-1} = x_{\varphi+\delta}(-c_4)$,
$$u_3v_3b_3x_2(c_4)n_2^{-1} = u_3x_{\varphi+\delta}(-c_4)v_3n_2^{-1}b_3=u_3x_{-\varphi-\delta}(0)v_3n_2^{-1}b_3 = u_4v_4b_4,
\qquad\hbox{with}
$$
$$u_4 = u_3x_{-\varphi-\delta}(0),\qquad
v_4 = v_3n_2^{-1} = \begin{pmatrix} 0 &0 &1\\ t &0 &0 \\ 0 &t^{-1} &0 \end{pmatrix}
\qquad\hbox{and}\qquad
b_4=1.$$

\smallskip\noindent
Step 5:  If $c_5\ne 0$ then by the folding law and the fact that $v_4x_{-\alpha_0}(c_5^{-1})v_4^{-1} = x_{-\alpha_1}(c_5^{-1})$,
$$u_4v_4b_4x_0(c_5)n_0^{-1} = u_4v_4x_{-\alpha_0}(c_5^{-1}) x_{\alpha_0}(-c_5)
h_{\alpha_0^\vee}(c_5)b_4
=u_4x_{-\alpha_1}(c_5^{-1})v_4b_5 = u_5v_5b_5,
$$
where
$$u_5=u_4x_{-\alpha_1}(c_5^{-1}),\qquad v_5=v_4,\qquad\textrm{and}\qquad b_5 = x_{\alpha_0}(-c_5)h_{\alpha_0^\vee}(c_5)b_4
=\begin{pmatrix} c_5^{-1} &0 &0 \\ 0 &1 &0 \\ -t &0 &c_5\end{pmatrix}.
$$

\smallskip\noindent
Step 6:  If $c_5^{-1}c_6= 0$ (so $c_6=0$) then
$$u_5v_5b_5x_1(c_6)n_1^{-1}= u_5v_5x_1(c_5^{-1}c_6)n_1^{-1}b_5'
=u_5x_{-\alpha_2-2\delta}(0)v_5n_1^{-1}b_5' = u_6v_6b_6,
$$
with 
$$u_6=u_5x_{-\alpha_2-2\delta}(0),\quad v_6 = v_5n_1^{-1}=\begin{pmatrix} 0 &0 &1 \\ 0 &-t &0 \\ t^{-1} &0 &0\end{pmatrix}
\quad\hbox{and}\quad
b_6=b_5' = \begin{pmatrix} 1 &0 &0 \\ 0 &c_5^{-1} &0 \\ -c_6t &t &c_5\end{pmatrix}
$$
so that $b_5x_1(c_6)n_1^{-1} = x_1(c_5^{-1}c_6)n_1^{-1}b_5'$.

\smallskip\noindent
Step 7:  If $c_5c_7\ne 0$ then, since $v_6x_{-\alpha_0}(c)v_6^{-1}=x_{-\varphi-2\delta}(c)$,
\begin{align*}
u_6v_6b_6x_0(c_7)n_0^{-1}
&=u_6v_6x_0(c_5c_7)n_0^{-1}b_6'
=u_6v_6x_{-\alpha_0}(c_5^{-1}c_7^{-1})x_{\alpha_0}(-c_5c_7)h_{\alpha_0^\vee}(c_5c_7)b_6' \\
&=u_6x_{-\varphi-2\delta}(c_5^{-1}c_7^{-1})v_6b_7
=u_7v_7b_7,
\end{align*}
where
$$u_7=u_6x_{-\varphi-2\delta}(c_5^{-1}c_7^{-1}),\qquad v_7=v_6,\qquad\textrm{and}
$$ 
$$b_6' = \begin{pmatrix}c_5 &-1 &0 \\ 0 &c_5^{-1} &0 \\ 0 &0 &1\end{pmatrix}
\quad\hbox{and}\quad
b_7 =x_{\alpha_0}(-c_5c_7)h_{\alpha_0^{\vee}}(c_5c_7)b_6'= \begin{pmatrix} c_7^{-1} &-c_5^{-1}c_7^{-1} &0 \\ 0 &c_5^{-1} &0 \\
-c_5t &t &c_5c_7\end{pmatrix},
$$
so that $b_6x_0(c_7)n_0^{-1} = x_0(c_5c_7)n_0^{-1}b_6'$.

\smallskip\noindent
Step 8:  No restrictions on $c_5^{-2}c_7^{-1}c_8$.  
Since $v_7x_{\alpha_2}(c)v_7^{-1}=x_{-\alpha_1+\delta}(-c)$,
$$u_7v_7b_7x_2(c_8)n_2^{-1} 
= u_7v_7x_2(c_5^{-2}c_7^{-1}c_8)n_2^{-1}b_7' 
= u_7x_{-\alpha_1+\delta}(-c_5^{-2}c_7^{-1}c_8)v_7n_2^{-1}b_7'
=u_8v_8b_8,
$$
with 
$$
u_8=u_7x_{-\alpha_1+\delta}(-c_5^{-2}c_7^{-1}c_8),\qquad v_8=v_7n_2^{-1} = \begin{pmatrix} 0 &1 &0 \\ 0 &0 &t \\ t^{-1} &0 &0\end{pmatrix},\qquad\textrm{and}
$$
$$
b_8 = b_7' 
= \begin{pmatrix} c_7^{-1} &-c_5^{-1}c_7^{-1}c_8 &c_5^{-1}c_7^{-1} \\
-c_5t &c_5c_7+c_8t &-t \\
-c_5^{-1}c_7^{-1}c_8t &c_5^{-2}c_7^{-1}c_8^2t &c_5^{-1}-c_5^{-2}c_7^{-1}c_8t\end{pmatrix},
$$
so that $b_7x_2(c_8)n_2^{-1}=x_2(c_5^{-2}c_7^{-1}c_8)n_2^{-1}b_7'$.  

\smallskip\noindent 
Step 9: If $c_5^{-1}c_7c_9-c_5^{-1}c_8=0$ (so $c_9=c_7^{-1}c_8$) then
\begin{align*}
u_8v_8b_8x_0(c_9)n_0^{-1}=u_8v_8x_0(c_5^{-1}c_7c_9-c_5^{-1}c_8)n_0^{-1}b_8'=u_8x_{-\alpha_2-3\delta}(0)v_8n_0^{-1}b_8'=u_9v_9b_9
\end{align*}
with $u_9,v_9$ and $b_9$ as in (\ref{eq:finalstats}).

\end{document}